\newtheorem{theorem}{Theorem}[section]
\newtheorem{lemma}[theorem]{Lemma}
\theoremstyle{definition}
\newtheorem{definition}[theorem]{Definition}
\begin{document}
\title{Equivariant Poincar\'e-Hopf theorem}

\author[Hongzhi Liu]{Hongzhi Liu}
\address{\normalfont{School of Mathematics, Shanghai University of Finance and Economics}} \email{liu.hongzhi@sufe.edu.cn} 

\author[Hang Wang]{Hang Wang} 
\address{\normalfont{Research Center for Operator Algebras, School of Mathematical Sciences\\
East China Normal University, 3663 North Zhongshan Rd,\\
Shanghai 200062, P.R.China}}
\email{wanghang@math.ecnu.edu.cn}

\author[Zijing Wang]{Zijing Wang}
\address{\normalfont{Research Center for Operator Algebras, School of Mathematical Sciences\\
East China Normal University, 3663 North Zhongshan Rd,\\
Shanghai 200062, P.R.China}}
\email{52205500014@stu.ecnu.edu.cn}

\author[Shaocong Xiang]{Shaocong Xiang}
\address{\normalfont{Research Center for Operator Algebras, School of Mathematical Sciences\\
East China Normal University, 3663 North Zhongshan Rd,\\
Shanghai 200062, P.R.China}}
\email{52265500015@stu.ecnu.edu.cn}

\begin{abstract}
    In this paper, we employ the framework of localization algebras to compute the equivariant K-homology class of the Euler characteristic operator, a central object in studying equivariant index theory on manifolds. This approach provides a powerful algebraic language for analyzing differential operators on equivariant structures and allows for the application of Witten deformation techniques in a K-homological context. Utilizing these results, we establish an equivariant version of the Poincaré-Hopf theorem, extending classical topological insights to the equivariant case, inspired by the results of L\"uck-Rosenberg. This work thus offers a new perspective on the localization techniques in the equivariant K-homology, highlighting their utility in deriving explicit formulas for index-theoretic invariants.
\end{abstract}

	\maketitle

	\section{Introduction}
	The Witten deformation, a powerful technique introduced by Witten in 1982, involves a one-parameter family of Laplace-like operators acting on the algebra of differential forms on manifolds obtained by deforming the de Rham operator \cite{ssm}. Witten demonstrated that, by examining the asymptotic behavior of the spectrum of these operators, the Morse inequalities for the deforming function could be recovered analytically.
    
    This analytical approach to problems coming from topology has been adapted to the realm of index theory for elliptic operators, providing a powerful asset for the derivation of explicit formulas in index theory. For instance, it enables a purely analytic proof of the Poincar\'e-Hopf index formula \cite{z}. Moreover, in \cite{rj}, following the idea of Witten deformation, 
    Rosenberg proved that the K-homology class of the Euler characteristic operator is as trivial as one can imagine, i.e. comes from the embedding of points. Later,  L\"uck and Rosenberg established an equivariant version of this theorem in \cite{lr}. 


   In L\"uck and Rosenberg's proof in \cite{lr}, the key idea is to use some deformed de Rham operator to simplify problems by localizing around the zero points of a given vector field. 
    
    To be more precise, let $\Gamma$ be a countable discrete group and $M$ be an even-dimensional cocompact proper smooth $\Gamma$-manifold without boundary, equipped with a $\Gamma$-invariant Riemannian metric. The Euler characteristic operator
	$$ d+d^* \colon \Omega^*(M)  \rightarrow \Omega ^*(M) , $$
	with the $\mathbb{Z}_2$-grading $\Omega^*(M)=\Omega^*(M)^{even} \oplus \Omega^*(M)^{odd}$ is an equivariant differential operator. Let $\Xi$ be an equivariant vector field on $M$ which is non-degenerate and transverse to the zero-section, then we define the operator
    $ \hat{c}(\Xi) :\Omega^{*}(M) \rightarrow \Omega^{*}(M) $ 
    given by the right Clifford multiplication by $\Xi$, twisted by the grading. Namely, for $ \omega\in \Omega^{*}(M)$, we have
    \begin{align*}
        \hat{c}(\Xi)(\omega)
        =
        \begin{cases}
            \omega \cdot \Xi, \hspace{3.5em} \omega \in  \Omega^{*}(M)^{even}, \\
            \omega \cdot (-\Xi), \hspace{2em} \omega \in  \Omega^{*}(M)^{odd}, 
        \end{cases}
    \end{align*}
    where $\cdot$ is the Clifford multiplication. The deformed Euler characteristic operator we consider is $d+d^*+\lambda \hat{c}(\Xi)$ for some $\lambda \in [0,\infty)$. In \cite{lr}, L\"uck and Rosenberg consider some Fredholm module determined by $d+d^*+\lambda \hat{c}(\Xi)$, which, as $\lambda \rightarrow \infty$, yields a new Fredholm module depending only on the inclusion $\mathrm{Zero}(\Xi) \hookrightarrow M$ and is still homotopic to the Fredholm module determined by $d+d^*+\lambda \hat{c}(\Xi)$, where $\mathrm{Zero}(\Xi)$ is the set of zero points of $\Xi$. Hence, the K-homology class of $d+d^*$ can be determined by examining the neighborhood of the zero set of $\Xi$.

    In this paper, we use the language of localization algebras (\cite{yu1997localization}) to generalize the idea of Witten deformation. We will also use this idea to calculate the class of the Euler characteristic operator in the K-theory of the equivariant localization algebra of the manifold, i.e. the equivariant K-homology class represented by the Euler characteristic operator. This approach provides a new proof of the main theorem in \cite{lr}. Our main result is the following.

    \begin{theorem}
    \label{main}
        The K-homology class $[d+d^*]$ is in the image of the following maps:
        $$
        \bigoplus_{j=1}^{m} K_{0}(C^*(\Gamma_{j}))
        \xrightarrow[\oplus_{j=1}^{m}\varphi_{j}]{\cong}
        \bigoplus_{j=1}^{m} K_{0}(RC_{L}^*( N_{\varepsilon}(x_{j}) )^{\Gamma_{j}})
        \xrightarrow[r_{*}]{\cong}
        K_{0}\left( RC_{L}^{*} (\bigcup_{j \in \mathbb{N}} N_{\varepsilon}(x_{j})) ^{\Gamma} \right)
        \xrightarrow[i_{*}]{}
        K_{0}^{\Gamma} (M). $$
        We have: 
        $$[d+d^*]=\sum_{j=1}^{m} \mathrm{ind} (\Xi,x_{j})\cdot i_{*} \circ r_{*} \circ \varphi_{j}  ([I_{j}]).$$
    \end{theorem}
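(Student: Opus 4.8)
The plan is to represent $[d+d^{*}]\in K_{0}(RC_{L}^{*}(M)^{\Gamma})\cong K_{0}^{\Gamma}(M)$ by bounded transforms of the deformed Euler operators $D_{\lambda}:=d+d^{*}+\lambda\hat{c}(\Xi)$ and then to take $\lambda$ large. The first point is that the class is unchanged for every finite $\lambda$: since $\hat{c}(\Xi)$ is a zeroth-order, $\Gamma$-equivariant, odd operator, the segment $s\mapsto d+d^{*}+s\hat{c}(\Xi)$, $s\in[0,\lambda]$, is a path of essentially self-adjoint $\Gamma$-equivariant operators with a fixed principal symbol, so applying a normalizing function with compactly supported Fourier transform yields a norm-continuous path of multipliers with controlled propagation, hence a homotopy of classes in $K_{0}(RC_{L}^{*}(M)^{\Gamma})$. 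Thus $[d+d^{*}]=[D_{\lambda}]$ for every $\lambda\in[0,\infty)$, and it suffices to analyze $[D_{\lambda}]$ for one conveniently large $\lambda$.

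The engine of the localization is the Witten--Bochner identity. Because $d+d^{*}$ is assembled from \emph{left} Clifford multiplication whereas $\hat{c}(\Xi)$ comes from \emph{right} Clifford multiplication, the two commute pointwise on $\Lambda^{*}T^{*}M$, the graded commutator of $d+d^{*}$ with $\hat{c}(\Xi)$ is of order zero, and one computes
\[
D_{\lambda}^{2}=(d+d^{*})^{2}+\lambda^{2}\,|\Xi|^{2}+\lambda\,\mathcal{R},
\]
where $\mathcal{R}$ is a $\Gamma$-equivariant bundle endomorphism, so $\|\mathcal{R}\|_{\infty}=:C_{0}<\infty$ by cocompactness, while non-degeneracy of the zeros together with cocompactness gives $|\Xi|\ge\delta_{\varepsilon}>0$ on $M\setminus N_{\varepsilon}(\mathrm{Zero}(\Xi))$. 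Hence $\langle D_{\lambda}^{2}\phi,\phi\rangle\ge(\lambda^{2}\delta_{\varepsilon}^{2}-\lambda C_{0})\|\phi\|^{2}$ for sections $\phi$ supported outside $N_{\varepsilon}(\mathrm{Zero}(\Xi))$, so $D_{\lambda}$ develops an arbitrarily large spectral gap away from $\mathrm{Zero}(\Xi)$ as $\lambda\to\infty$. Feeding this estimate into the normalizing function and using finite propagation speed (so the representative has propagation $\le t^{-1}$ at the localization parameter $t$), the non-trivial part of the resulting representative of $[D_{\lambda}]$ is, modulo the localization ideal, supported in $N_{\varepsilon}(\mathrm{Zero}(\Xi))$; by the locality/excision property of $RC_{L}^{*}$ this shows that $[d+d^{*}]=[D_{\lambda}]$ lies in the image of $i_{*}\colon K_{0}\bigl(RC_{L}^{*}(N_{\varepsilon}(\mathrm{Zero}(\Xi)))^{\Gamma}\bigr)\to K_{0}^{\Gamma}(M)$.

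It remains to identify the preimage. Since the $\Gamma$-action on $M$ is proper and cocompact and $\Xi$ is transverse to the zero section, $\mathrm{Zero}(\Xi)$ is a discrete $\Gamma$-set with finitely many orbits $\Gamma x_{1},\dots,\Gamma x_{m}$ and finite stabilizers $\Gamma_{j}$; after shrinking $\varepsilon$, $N_{\varepsilon}(\mathrm{Zero}(\Xi))=\bigsqcup_{j\in\mathbb{N}}N_{\varepsilon}(x_{j})$ is a $\Gamma$-invariant disjoint union of geodesic balls permuted by $\Gamma$. Decomposing over orbits and combining the induction isomorphism for proper actions with the $\Gamma_{j}$-equivariant contractibility of $N_{\varepsilon}(x_{j})$ onto $x_{j}$ produces the isomorphism $r_{*}\circ(\oplus_{j=1}^{m}\varphi_{j})\colon\bigoplus_{j=1}^{m}K_{0}(C^{*}(\Gamma_{j}))\xrightarrow{\ \cong\ }K_{0}\bigl(RC_{L}^{*}(N_{\varepsilon}(\mathrm{Zero}(\Xi)))^{\Gamma}\bigr)$ of the statement, under which $[D_{\lambda}]$ splits as a sum of one local contribution per orbit.

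Finally I would compute the contribution at $x_{j}$. In $\Gamma_{j}$-equivariant geodesic normal coordinates $\Xi$ agrees to first order with the linear field $A_{j}x$, where $A_{j}=\nabla\Xi|_{x_{j}}\in\End(T_{x_{j}}M)$ is invertible and $\Gamma_{j}$-equivariant, and after the Witten rescaling near $x_{j}$ the operator $D_{\lambda}$ is modeled by $d+d^{*}+\hat{c}(A_{j}x)$ on $T_{x_{j}}M$, the $O(|x|^{2})$ discrepancy between $\Xi$ and $A_{j}x$ and the curvature corrections being suppressed in the limit. Homotoping $A_{j}$ through invertible $\Gamma_{j}$-equivariant endomorphisms to a normal form, the model operator is Fredholm with one-dimensional kernel, spanned by a Gaussian times $\Lambda^{\mathrm{top}}$ of the negative eigenspace of $A_{j}$; its equivariant index in $K_{0}\bigl(RC_{L}^{*}(N_{\varepsilon}(x_{j}))^{\Gamma_{j}}\bigr)\cong K_{0}(C^{*}(\Gamma_{j}))$ equals $\mathrm{sign}(\det A_{j})\cdot\varphi_{j}([I_{j}])=\mathrm{ind}(\Xi,x_{j})\cdot\varphi_{j}([I_{j}])$, the scalar being exactly the parity of the dimension of that negative eigenspace, as in the classical Poincaré--Hopf local index. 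Summing over $j$ and applying $i_{*}\circ r_{*}$ gives the asserted formula. The step I expect to be the main obstacle is the second one: making the ``$\lambda\to\infty$'' localization rigorous inside the localization algebra, i.e.\ coordinating the deformation parameter $\lambda$ with the localization parameter $t\in[1,\infty)$ so that the Bochner estimate, the propagation bounds and the excision argument are simultaneously uniform; identifying, in the model computation, the one-dimensional $\Gamma_{j}$-representation on the kernel with $\mathrm{ind}(\Xi,x_{j})\cdot[I_{j}]$ is a secondary technical point.
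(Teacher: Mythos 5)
Your plan reproduces the paper's broad strategy (deform $d+d^*$ by $\hat{c}(\Xi)$, use a Bochner estimate to open a spectral gap off $\mathrm{Zero}(\Xi)$, excise, decompose over orbits via induction, compute one local contribution per orbit), and you have correctly put your finger on where the work lies. But that step — ``coordinating the deformation parameter $\lambda$ with the localization parameter $t$'' — is not a secondary concern; it is left unresolved in the proposal and its specific resolution is the content of Lemma~\ref{lemma1} and the proof of Theorem~\ref{1.2}. If you form the class from $\chi(t^{-1}D_{\lambda})$ with $\lambda$ fixed, the Bochner lower bound gives $(t^{-1}D_{\lambda})^{2}\geq t^{-2}(\lambda^{2}\delta_{\varepsilon}^{2}-\lambda C_{0})$ off the zeros, which tends to $0$ as $t\to\infty$; the gap you need disappears in precisely the limit that defines the localization algebra. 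The paper instead works with $D_{t,\lambda}=t^{-1}(d+d^{*})+\lambda\hat{c}(\Xi)$, i.e.\ it scales the elliptic part but \emph{not} the Clifford potential, so that $D_{t,\lambda}^{2}\geq t^{-2}D^{2}+\lambda^{2}a_{\varepsilon}-C t^{-1}\lambda\geq a_{\varepsilon}\lambda^{2}-C\lambda$ uniformly in $t\geq 1$. With a fixed large $\lambda$, this makes $1-\chi(D_{t,\lambda})^{2}$ uniformly small off $\mathrm{Zero}(\Xi)$; it then runs a concrete excision by splitting $L^{2}\Omega^{*}(M)$ into an inner ball, an annulus, and the exterior ($H_{0},H_{1},H_{2}$), using the propagation bound to kill the $H_{0}$--$H_{2}$ matrix entries, and invoking Lemma~\ref{lemma2} (a conjugation homotopy of $\Gamma$-invariant idempotents that are $\|\cdot\|$-close) to cleanly separate the idempotent into a piece supported on $H_{0}\oplus H_{1}$ and a piece equal to $q$ on $H_{2}$. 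None of this is automatic from ``the locality/excision property of $RC_{L}^{*}$''; the representable localization algebra does have a Mayer--Vietoris/pullback behaviour, but turning a pointwise Bochner estimate into a genuine element of $K_{0}(RC_{L}^{*}(\bigcup N_{\varepsilon}(x_{j}))^{\Gamma})$ requires exactly the idempotent surgery the paper performs.

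For the local contribution, you propose a genuinely different route. The paper never looks at the kernel of the harmonic-oscillator model: it identifies $[\hat{c}(\Xi)]=(-1)^{r}[C]=\mathrm{ind}(\Xi,x_{j})\cdot[C]$ in $K^{0}_{\Gamma_{j}}(\mathbb{R}^{n})$ (Lemma~\ref{H}), applies the equivariant Poincar\'e duality cap with $D_{AS}$ so that $[\hat{c}(\Xi)]\otimes[D_{AS}] = [d+d^{*}+\hat{c}(\Xi)]$ (Theorem~\ref{1.3}), and then composes with equivariant Bott periodicity $[I_{j}]\mapsto[C]$. Your kernel computation is closer to the classical Witten-deformation picture, and it would need to be completed more carefully: the ground state of the model operator is, as $\Gamma_{j}$-representation, $\Lambda^{\mathrm{top}}$ of the negative eigenspace of $A_{j}$, on which $\Gamma_{j}$ acts by the determinant character; showing that the resulting class in $K_{0}(RC_{L}^{*}(N_{\varepsilon}(x_{j}))^{\Gamma_{j}})\cong K_{0}(C^{*}(\Gamma_{j}))$ is $\mathrm{ind}(\Xi,x_{j})\cdot[I_{j}]$ and not a twist of it is precisely the kind of bookkeeping the paper's K-theoretic route handles through the Bott/Thom class rather than through the harmonic oscillator kernel. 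So the conclusions agree, but you should not treat the identification of the $\Gamma_{j}$-representation on the one-dimensional kernel as a ``secondary technical point''; it is where the two approaches actually diverge and where your version requires an independent argument.
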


    Throughout this paper, $ \{ x_{j} \}_{j \in \mathbb{N} } $ denotes the collection of non-degenerate zero points of $\Xi$, $\mathrm{ind} (\Xi,x_{j})$ is the index of $\Xi$ at its zero point $x_{j}$, defined as
    $$ \mathrm{ind} (\Xi,x_{j}) = \mathrm{sign} \left( \mathrm{det} \left( d_{x_{j}}\Xi : T_{x_{j}}M \rightarrow T_{x_{j}}M \right) \right) \in \{ \pm 1 \} , $$ 
    and the notation $ N_{\varepsilon} (x_{j}) $ refers to the tubular neighborhood of $x_{j}$ with a small width $ \varepsilon >0 $. Since the zero set $ \mathrm{Zero}(\Xi) $ is discrete — due to $\Xi$ being transverse to the zero-section with finite orbits under the cocompact group action — we designate $x_{1}, \cdots, x_{m}$ as the representatives of the orbits in $ \mathrm{Zero}(\Xi)$ for simplicity. Additionally, we choose $\varepsilon$ sufficiently small so that the neighborhoods $ N_{\varepsilon}(x_{j}) $, $j=1,\cdots, m$, do not intersect with each other. For each given $j$, denote the stabilizer group of $x_{j}$ by
    $$ \Gamma_{j} := \{ \gamma \in \Gamma : \gamma \cdot x_{j} = x_{j} \}, $$
    and the trivial representation of $\Gamma_{j}$ by $I_{j}$ . It is easy to check that $N_{\varepsilon}(x_{j})$ is $ \Gamma_{j} $-invariant. 
    
    The notation $ C_{L}^{*}(X) ^{G} $ refers to the localization algebra of a locally compact Hausdorff space $X$ under the action of a discrete group $G$ (see Definition \ref{def loc}), and $RC_{L}^{*}(X) ^{G}$ denotes the representable localization algebra of $X$ (see Definition \ref{def rloc}). 

    Notice that if $\Gamma$ is trivial, then each $\Gamma_{j}$ is also trivial so that the class of the trivial representation $[I_{j}]$ is identified with $1$ in $K_{0}(C^{*}(\Gamma_{j})) \cong \mathbb{Z}$.  In this case, Theorem~\ref{main} reduces to the original Poincar\'e-Hopf theorem.
    
    Our primary strategy for proving Theorem \ref{main} is to restrict the class of the Euler characteristic operator to the $\varepsilon$-neighborhood $\bigcup_{j \in \mathbb{N}} N_{\varepsilon}(x_{j})$ by analyzing the propagation of elements in the localization algebra. First, we shall prove the following theorem:

    \begin{theorem}
    \label{1.2}
        The K-homology class of $d+d^{*}$ lies in the image of the map
		$$ i_{*} :  
        K_{0}\left( RC_{L}^{*} (\bigcup_{j \in \mathbb{N}} N_{\varepsilon} ( x_{j} ))^{\Gamma} \right)
        \rightarrow
        K_{0}\left( C_{L}^{*}(M) ^{\Gamma} \right) ,  $$
		where $i_{*}$ denotes the map induced on representable K-homology induced by the inclusion of $ \bigcup_{j \in \mathbb{N}} N_{\varepsilon} ( x_{j} ) $ into $M$:
		$$ i \colon \bigcup_{j \in \mathbb{N}} N_{\varepsilon} ( x_{j} ) \hookrightarrow M. $$
	\end{theorem}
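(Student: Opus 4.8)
The plan is to carry out Witten's deformation directly inside the equivariant localization algebra and then read off the localization of supports from finite propagation speed. First I would fix an odd normalizing function $\chi$ on $\R$ whose Fourier transform $\widehat\chi$ is supported in $[-1,1]$, and represent $[d+d^{*}]\in K_{0}(C_{L}^{*}(M)^{\Gamma})$ by the family of graph projections of $F_{t}:=\chi\bigl(t^{-1}(d+d^{*})\bigr)$, $t\in[1,\infty)$; ellipticity of $d+d^{*}$ together with cocompactness of the action makes each $F_{t}$ a multiplier of $C_{L}^{*}(M)^{\Gamma}$ with $1-F_{t}^{2}\in C^{*}(M)^{\Gamma}$, while unit propagation speed for the Euler characteristic operator gives $\mathrm{prop}(F_{t})\le t^{-1}\to 0$. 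I would then introduce $D_{\lambda}:=d+d^{*}+\lambda\,\hat c(\Xi)$: because $\hat c(\Xi)$ is a zeroth-order, $\Gamma$-invariant, fibrewise bounded (by cocompactness) endomorphism, $D_{\lambda}$ has the same principal symbol, hence the same unit propagation speed, as $d+d^{*}$, and $\chi(t^{-1}D_{\lambda})$ depends norm-continuously on $(\lambda,t)\in[0,\infty)\times[1,\infty)$; consequently, for any continuous $\lambda\colon[1,\infty)\to[0,\infty)$ the family $G_{t}:=\chi(t^{-1}D_{\lambda(t)})$ is again a multiplier of $C_{L}^{*}(M)^{\Gamma}$ with $\mathrm{prop}(G_{t})\le t^{-1}$, and the homotopy $u\mapsto\bigl(t\mapsto\chi(t^{-1}D_{u\lambda(t)})\bigr)$, $u\in[0,1]$, shows that its graph projections still represent $[d+d^{*}]$.

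The next step is the Witten estimate. I would use the Bochner-type identity $D_{\lambda}^{2}=(d+d^{*})^{2}+\lambda^{2}|\Xi|^{2}+\lambda\,\mathcal R$ as in \cite{z,lr}, where $\mathcal R$ is a $\Gamma$-invariant bundle endomorphism with norm bounded uniformly (by cocompactness) in terms of $\nabla\Xi$. Since $\Xi$ vanishes only along $\mathrm{Zero}(\Xi)$ and $M/\Gamma$ is compact, there is $c_{\varepsilon}>0$ with $|\Xi|^{2}\ge c_{\varepsilon}$ outside $\bigcup_{j}N_{\varepsilon/2}(x_{j})$, so $D_{\lambda}^{2}\ge\tfrac12 c_{\varepsilon}\lambda^{2}$ on sections supported there once $\lambda$ is large; the standard Agmon/Combes--Thomas estimates then show that the spectral subspace $P_{\lambda}:=\mathbf{1}_{[-\rho_{\lambda},\rho_{\lambda}]}(D_{\lambda})$, with $\rho_{\lambda}:=\tfrac13\sqrt{c_{\varepsilon}}\,\lambda$, consists of sections concentrated inside $\bigcup_{j}N_{\varepsilon/2}(x_{j})$ up to an error exponentially small in $\lambda$, uniformly in the $\Gamma$-direction. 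Now I would choose $\lambda(t)$ with $\lambda(t)/t\to\infty$ (say $\lambda(t)=t^{2}$), fix a $\Gamma$-invariant cutoff $\phi$ with $\supp\phi\subseteq\bigcup_{j}N_{\varepsilon}(x_{j})$ and $\phi\equiv 1$ near $\mathrm{Zero}(\Xi)$, and examine $1-G_{t}^{2}=(1-\chi^{2})(t^{-1}D_{\lambda(t)})$: splitting $1-\chi^{2}$ into a part supported on a large interval $[-a(t),a(t)]$ with $t\,a(t)<\rho_{\lambda(t)}$ and a remainder of sup-norm tending to $0$, the first part is carried by $P_{\lambda(t)}$ and therefore equals $\phi X_{t}\phi$ plus an operator of norm $o(1)$; combining this with $\mathrm{prop}(1-G_{t}^{2})\le C/t$, one finds that $1-G_{t}^{2}$, and then $\sqrt{1-G_{t}^{2}}$ and all the entries of the graph projection that differ from a fixed trivial projection $e_{0}$, lie within $o(1)$ of $C^{*}\bigl(\bigcup_{j}N_{\varepsilon}(x_{j})\bigr)^{\Gamma}$ as $t\to\infty$.

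Finally, a standard perturbation argument for projections — small norm perturbations of a projection are homotopic to it — would allow me to replace the family of graph projections by one that, for $t$ large, equals $e_{0}$ plus an element of $M_{n}\bigl(RC_{L}^{*}(\bigcup_{j}N_{\varepsilon}(x_{j}))^{\Gamma}\bigr)$ and then, after extending over all $t\ge 1$ by a support-preserving homotopy, conclude that $[d+d^{*}]$ is the image under $i_{*}$ of a class in $K_{0}\bigl(RC_{L}^{*}(\bigcup_{j}N_{\varepsilon}(x_{j}))^{\Gamma}\bigr)$, which is the assertion; here the passage between $C_{L}^{*}$ and $RC_{L}^{*}$ on the cocompact set $\bigcup_{j}N_{\varepsilon}(x_{j})$ uses only the formalism of Definitions \ref{def loc} and \ref{def rloc}.

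The main obstacle I expect is the second step: one has to run the two limits $t\to\infty$ and $\lambda=\lambda(t)\to\infty$ simultaneously while keeping every intermediate operator a multiplier of the relevant $C^{*}$-algebra with uniformly shrinking propagation and uniformly controlled error, and in particular one has to upgrade the heuristic that the low-lying spectral data of $D_{\lambda}$ concentrates at $\mathrm{Zero}(\Xi)$ to honest norm estimates that are uniform over the possibly non-cocompact geometry — this is precisely where the Agmon-type exponential decay of $P_{\lambda}$, the $\Gamma$-uniformity coming from cocompactness, and the fact that a band-limited $\chi$ can never be taken exactly equal to $\pm 1$ outside a compact set all have to be reconciled. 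By contrast, the homotopy invariance and functoriality used in the first and last steps are routine once the localization-algebra machinery of the earlier sections is available.
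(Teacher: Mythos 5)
Your proposal has the same global architecture as the paper's argument: perform the Witten deformation $D_\lambda = d+d^{*}+\lambda\hat c(\Xi)$ inside the localization picture, use a band-limited normalizing function $\chi$, invoke a Bochner-type inequality to make the operator uniformly invertible off $\bigcup_j N_{\varepsilon}(x_j)$, exploit propagation $\leq 6t^{-1}$, and finish with an idempotent-perturbation homotopy. The endgame (small perturbations of projections are conjugate, hence equal in K-theory) is exactly the paper's Lemma~\ref{lemma2}. Where you diverge is in the intermediate analytic step: you invoke Agmon/Combes--Thomas decay to show that the spectral band projection $P_\lambda = \mathbf 1_{[-\rho_\lambda,\rho_\lambda]}(D_\lambda)$ concentrates near $\mathrm{Zero}(\Xi)$ with exponentially small error, and then push this through the functional calculus to localize the entries of the graph projection. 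The paper avoids all exponential-decay estimates and instead uses a purely algebraic device: split $L^2\Omega^{*}(M)$ into three blocks $H_0\oplus H_1\oplus H_2$ over $N_{\varepsilon/4}$, $N_{\varepsilon/2}\setminus N_{\varepsilon/4}$, $M\setminus N_{\varepsilon/2}$, use finite propagation to kill the corners $p_{13},p_{31}$ of $P_{t,\lambda}-q$ for $t$ large, and use the explicit pointwise bound $1-\chi(x)^2<8/(\pi|x|)$ together with the restriction-of-functional-calculus lemma from \cite{restr} to make the $H_1\oplus H_2$ block within $\delta$ of $q$; then one $2\delta$-perturbation disconnects $H_0\oplus H_1$ from $H_2$ and you read the support of the resulting idempotent off the matrix. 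This ``three-block + restriction lemma'' route buys you a self-contained, quantitative argument whose only analytic input is the operator inequality $D_{t,\lambda}^2\geq t^{-2}D^2+\lambda^2\inf\|\Xi\|^2-Ct^{-1}\lambda$ of Lemma~\ref{lemma1}, with no reference to Agmon decay or the behaviour of spectral subspaces.

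The place where your sketch is genuinely incomplete is the very step you flagged: you assert, rather than show, that $P_{\lambda(t)}$ is concentrated in $\bigcup_jN_{\varepsilon/2}(x_j)$ ``up to an error exponentially small in $\lambda$, uniformly in the $\Gamma$-direction.'' That uniformity is not automatic on a non-cocompact total space, and it is exactly what your argument needs to produce an operator in $RC_L^{*}(\bigcup_j N_{\varepsilon}(x_j))^{\Gamma}$ (which additionally requires the class to be represented by operators with genuinely compact support, cf.~Definition~\ref{def rloc}, not just small off-support norm). There is also a quieter gap: the graph projection entries are built from $U_{t,\lambda},V_{t,\lambda}$, and the quantities you estimate are $1-\chi^2(t^{-1}D_\lambda)$; passing from the second to the first requires the inequality $\max\{\|1-U_{t,\lambda}V_{t,\lambda}\|,\|1-V_{t,\lambda}U_{t,\lambda}\|\}\leq\|1-\chi(t^{-1}D_{\lambda})^2\|$ together with $\|P_{t,\lambda}-q\|\leq 4\max\{\ldots\}$, which your sketch elides. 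Both points are handled cleanly by the paper's three-block argument, which you would do well to adopt: it turns the hard analytic concentration estimate you were reaching for into an elementary consequence of finite propagation plus the Bochner inequality, and it automatically produces an idempotent whose support is a genuine compact subset of $\bigcup_jN_{\varepsilon}(x_j)$.
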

    
    After we localized the equivariant $K$-homology class of the operator, it is necessary to determine the extent of information obtainable from each inclusion $ N_{\varepsilon}(x_{j}) \hookrightarrow M$ for each $x_{j}$. By utilizing the balanced product construction, we can express $ \bigcup_{j \in \mathbb{N}} N_{\varepsilon} ( x_{j} ) $ as 
    $$ \bigcup_{j=1}^{m} \Gamma \times_{\Gamma_{j}} N_{\varepsilon} ( x_{j} ). $$
    Since $ N_{\varepsilon}(x_{j}) $, $j=1,\cdots$ are pairwise disjoint, we could view $ K_{0}\left( RC_{L}^{*} (\bigcup_{j \in \mathbb{N}} N_{\varepsilon} ( x_{j} ))^{\Gamma} \right) $ as $ \bigoplus_{i=1}^{m} K_{0}\left( RC_{L}^{*} ( \Gamma \times_{\Gamma_{j}} N_{\varepsilon} ( x_{j} ))^{\Gamma} \right).$ By induction,  we have the following isomorphism
    $$ K_{0}\left( RC_{L}^{*} (bigcup_{j \in \mathbb{N}} N_{\varepsilon} ( x_{j} ))^{\Gamma} \right) 
    \cong 
    \bigoplus_{i=1}^{m} K_{0}\left( RC_{L}^{*} ( \Gamma \times_{\Gamma_{j}} N_{\varepsilon} ( x_{j} ))^{\Gamma} \right) 
    \cong 
    \bigoplus_{i=1}^{m} K_{0}\left( RC_{L}^{*} ( N_{\varepsilon} ( x_{j} ))^{\Gamma_{j}} \right) . $$
    
    After that, we will construct explicit classes in $ K_{0}\left( RC_{L}^{*} ( N_{\varepsilon} ( x_{j} ))^{\Gamma_{j}} \right) $ for all $ j = 1 , \cdots , m $ to serve as a pre-image of $[d+d^{*}]$ under $i_{*}$. These classes are determined by $ t^{-1} (d+d^{*}) + \lambda \hat{c}(\Xi) $ and will be denoted by $ [d+d^{*}]|_{N_{\varepsilon}(x_{j})} $ (see Definition \ref{def}) for convenience. 
    Notice that $ N_{\varepsilon}(x_{j}) $ is diffeomorphic to $\mathbb{R}^n$, which facilitates the identification of the $\Gamma_{j}$-space $ N_{\varepsilon}(x_{j}) $ with $\mathbb{R}^n$ where the $ \Gamma_{j} $-action on $\mathbb{R}^n$ is induced by the diffeomorphism. We will then prove the following theorem:
    
	\begin{theorem}
    \label{1.3}
	    For each $ j = 1 , \cdots , m $, there exists an isomorphism
        $$ \mathrm{PD}_{j} : K_{0}^{\Gamma_{j}}(\mathbb{R}^n) \stackrel{\cong}{\longrightarrow} K_{0}\left( RC_{L}^{*} (N_{\varepsilon}(x_{j}))^{\Gamma_{j}} \right), $$
        which is given by Poincar\'e duality and maps the class of the Bott operator in $ K_{0}^{\Gamma_{j}}(\mathbb{R}^n) $ to $ \mathrm{ind} (\Xi,x_{j}) \cdot [d+d^{*}]|_{N_{\varepsilon}(x_{j})} $.
	\end{theorem}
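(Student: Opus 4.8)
The plan is to prove Theorem~\ref{1.3} in two parts: first establish the Poincar\'e duality isomorphism $\mathrm{PD}_j$, and then compute where it sends the class of the Bott operator.

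\textbf{Construction of the isomorphism.} Since $N_\varepsilon(x_j)$ is $\Gamma_j$-equivariantly diffeomorphic to $\mathbb{R}^n$ (with $n$ even), it is a complete $\Gamma_j$-spin$^c$ manifold: the trivial bundle $\mathbb{R}^n = T_0\mathbb{R}^n$ carries a $\Gamma_j$-invariant spin$^c$ structure because the $\Gamma_j$-action is linear after the identification. Poincar\'e duality in this setting is realized by capping with the $K$-homology fundamental class, or equivalently, on the level of the relevant $C^*$-algebras, by the Kasparov-type product with a Dolbeault/Clifford-symbol class. Concretely, I would use the known identification $K^{\Gamma_j}_0(\mathbb{R}^n) \cong K_0(RC_L^*(\mathbb{R}^n)^{\Gamma_j})$ provided by the localization-algebra model of equivariant $K$-homology (this is the content of Yu's theorem in the equivariant setting, and gives an isomorphism between the analytic $K$-homology groups and the $K$-theory of the representable localization algebra). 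The identification $\mathrm{PD}_j$ is then the composition of this localization isomorphism with the equivariant Poincar\'e duality $K^0_{\Gamma_j}(\mathbb{R}^n) \cong K_0^{\Gamma_j}(\mathbb{R}^n)$ coming from the invariant spin$^c$ structure; but for the statement we only need $\mathrm{PD}_j$ as an isomorphism onto $K_0(RC_L^*(N_\varepsilon(x_j))^{\Gamma_j})$, so it suffices to invoke the localization-algebra description directly.

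\textbf{Computing the image of the Bott class.} This is the heart of the proof and where the Witten-deformation analysis enters. The Bott operator on $\mathbb{R}^n$ is (up to sign and normalization) the operator $D + \lambda c(x)$ where $D = d + d^*$ is the de Rham operator on $\mathbb{R}^n$ and $c(x)$ is Clifford multiplication by the radial vector field $x \mapsto x$, whose single zero at the origin has index $+1$. By the definition of $[d+d^*]|_{N_\varepsilon(x_j)}$ (Definition~\ref{def}), this class is represented by $t^{-1}(d+d^*) + \lambda\hat{c}(\Xi)$; after the diffeomorphism $N_\varepsilon(x_j) \cong \mathbb{R}^n$, the vector field $\Xi$ becomes, to first order near $x_j$, the linear vector field $v \mapsto d_{x_j}\Xi(v)$. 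The key computation is that the harmonic-oscillator-type operator $(d+d^*) + \lambda\hat{c}(\Xi)$ on $\mathbb{R}^n$ has, by a standard Witten-deformation spectral-gap argument, a one-dimensional kernel concentrated at the origin, spanned by a Gaussian-type form whose degree parity is determined by the signature of $d_{x_j}\Xi$. I would make this precise by diagonalizing $d_{x_j}\Xi$, reducing to a product of one-dimensional harmonic oscillators, and identifying the ground state; the upshot is that the Fredholm-module (equivalently localization-algebra) class of $(d+d^*)+\lambda\hat{c}(\Xi)$ equals $\pm 1$ times that of the Bott operator, with the sign being exactly $\mathrm{ind}(\Xi, x_j) = \mathrm{sign}(\det d_{x_j}\Xi)$. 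Rearranging, $\mathrm{PD}_j$ sends the Bott class to $\mathrm{ind}(\Xi,x_j)\cdot[d+d^*]|_{N_\varepsilon(x_j)}$, using that $\mathrm{ind}(\Xi,x_j)^2 = 1$.

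\textbf{Main obstacle.} The principal difficulty is making the Witten-deformation argument work at the level of localization algebras rather than merely at the level of $K$-homology classes: one must show that the family of operators $t^{-1}(d+d^*) + \lambda\hat{c}(\Xi)$, as $t$ and $\lambda$ vary, defines a genuine element of $RC_L^*(N_\varepsilon(x_j))^{\Gamma_j}$ with the correct propagation and asymptotic behavior, and that the homotopy collapsing it to the Bott operator stays inside the algebra and is $\Gamma_j$-equivariant. The equivariance is automatic since $\Xi$ is an equivariant vector field and $x_j$ is fixed by $\Gamma_j$, but verifying the propagation estimates (uniform control of the commutators with functions as $t \to 0$) requires the Gaussian decay of the ground state — this is precisely where the non-degeneracy of $\Xi$ at $x_j$ is used. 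A secondary, more bookkeeping-type obstacle is keeping the orientation and grading conventions consistent so that the sign genuinely comes out as $\mathrm{sign}(\det d_{x_j}\Xi)$ and not its negative; I would pin this down by checking the rank-one case $n=2$ with $\Xi$ a rotation versus a hyperbolic vector field.
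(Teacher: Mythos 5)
Your proposal and the paper's proof set up Poincar\'e duality in essentially the same way — as a Kasparov-type product with the Dirac/Dolbeault class on $\mathbb{R}^n$ — but diverge sharply on how the Bott class is tracked through that isomorphism, and your route carries genuine analytic overhead the paper avoids.

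The paper's argument is almost entirely algebraic. It writes $\mathrm{PD}_j$ as $[\alpha]\mapsto[\alpha\otimes D_{AS}]$ with $D_{AS}$ the Atiyah--Singer (Dolbeault) operator on $\mathbb{R}^n$, invokes the external-to-internal product identity $[\hat{c}(\Xi)]\otimes[D_{AS}]=[\hat{c}(\Xi)\otimes 1 + 1\otimes D_{AS}]$, and observes that the right-hand side is precisely $[d+d^*+\hat{c}(\Xi)]=[d+d^*]|_{N_\varepsilon(x_j)}$. The $\pm 1$ then falls out of a short lemma (Lemma~\ref{H}) comparing $[\hat{c}(\Xi)]$ with the Bott class $[C]$, proved by a one-coordinate calculation. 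No spectral analysis is needed, and the issue you flag as the ``main obstacle'' — propagation estimates for the family $t^{-1}(d+d^*)+\lambda\hat{c}(\Xi)$ and justifying a homotopy inside the localization algebra — does not arise: that work was already done once and for all in the proof of Theorem~\ref{1.2}, and here one only needs to identify the already-localized class.

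Your approach — explicitly diagonalizing $d_{x_j}\Xi$, reducing to one-dimensional harmonic oscillators, and reading off the Gaussian ground state in degree $r$ — can be made to work, because $\mathbb{R}^n$ is $\Gamma_j$-equivariantly contractible, so $K_0(RC_L^*(\mathbb{R}^n)^{\Gamma_j})\cong R(\Gamma_j)$ and the localization-algebra class is detected by the equivariant index. But there is a gap you should close: knowing the kernel is a one-dimensional space spanned by $e^{-|y|^2/2}\,dy_1\wedge\cdots\wedge dy_r$ does not yet say the class equals $\pm[C]$; it says the class equals $(-1)^r$ times the class of the $\Gamma_j$-line on which that form sits, and $\Gamma_j$ acts on $dy_1\wedge\cdots\wedge dy_r$ by the determinant of its action on the unstable subspace. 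You need to argue that this character agrees with the $\Gamma_j$-representation carried by the kernel of the Bott operator $C$ (which is the trivial one), or at least that the discrepancy is what makes the two sides match. The paper's Lemma~\ref{H} is where this comparison is implicitly made; your spectral computation would need to make the same comparison explicit rather than concluding ``the sign comes out as $\mathrm{sign}(\det d_{x_j}\Xi)$'' from the degree alone. Pinning down grading and orientation conventions, as you suggest doing in the $n=2$ case, is part of this but is not the whole story in the equivariant setting.
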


     Recall the standard result of Bott periodicity in \cite{ph}, which states that there exists a canonical isomorphism between $ K_{0}^{\Gamma_{j}}(\mathbb{R}^n) $ and $ K_{0}(C^{*}(\Gamma_{j})) $, under which the class of the Bott operator in $ K_{0}^{\Gamma_{j}}(\mathbb{R}^n) $ corresponds to the class of the trivial representation $ [I_{j}] \in K_{0}(C^{*}(\Gamma_{j})) $. Thus, our main theorem follows directly as a corollary of these results.

    This paper is organized as follows:
    \begin{enumerate}
        \item Preliminaries: In this section, we will establish some basic properties of elements in localization algebras.
        \item Equivariant Euler operator and localization trick: In this section, we will demonstrate Theorems \ref{1.2} and \ref{1.3}.
        \item Higher index map and Poincaré-Hopf Theorem on K-homology level: In this section, we will prove Theorem~\ref{main}, which provides an equivariant version of the Poincaré-Hopf index formula.
    \end{enumerate}

	\section{Preliminaries}
	In this section, we review the construction of geometric $C^{*}$-algebras and the higher index theory. See \cite{yu} for further details.

	\subsection{Equivariant Roe algebras and Equivariant localized Roe algebras}
	
	Let $X$ be a locally compact space on which a countable discrete group $\Gamma$ acts properly. Then, the action of $\Gamma$ on $X$ induces an action of $\Gamma$ on $C_0(X)$, defined by $(\gamma \cdot f)(x) := f(\gamma^{-1}x)$ for every $\gamma \in \Gamma$, $f \in C_0(X)$, and $x \in X$.

    \begin{definition}
        An $X$-$\Gamma$-module is a separable Hilbert space $H_{X}$ equipped with 
        \begin{enumerate} 
            \item [(1)] a non-degenerate representation $ \rho \colon C_0(X) \rightarrow B(H_X) $ , where $ B(H_X)$ is the set of all bounded linear operators on $H_X$;
            \item [(2)] a unitary representation $ U \colon \Gamma \rightarrow U(H_X) $ , where $ U(H_X)$ is the set of all unitary operators on $H_X$;
            \item [(3)] the unitary representation $U$ spatially implements the action of $\Gamma$ on $C_0(X)$ , i.e. 
            $$ U(\gamma) f U(\gamma)^*= \gamma \cdot f  $$
            for any $\gamma\in \Gamma$ and $ f \in C_0(X) $.
        \end{enumerate}
        
        An $X$-$\Gamma$-module is called standard if no nonzero function in $C_{0}(X)$ acts as a compact operator. An $X$-$\Gamma$-module is called admissible if, for any finite subgroup $F$ of $\Gamma_{ j }$ and for any $F$-invariant Borel subset $E$ of $X$, there exists a Hilbert space $H_{E}$ equipped with the trivial representation of $F$ such that $\chi_E H_{E}$ and $l^{2}(F) \otimes H_{E}$ are isomorphic as representations of $F$.

        In this context, $\chi_{E}$ denotes the characteristic function of the set $E$, and its action on $H_{E}$ is defined by a representation mapping from the space of all Borel functions on $X$, $ \mathrm{Borel}(X) \rightarrow B(H_X)$, which is extended by the representation $\rho \colon C_0(X) \rightarrow B(H_X)$.

    \end{definition}

    For example, if $X$ is a connected Riemannian manifold with non-zero dimension, whether complete or not, then for any Hermitian vector bundle $E$ on $X$, $L^2(E)$ serves as a standard $X$-module. Moreover, if $X$ is equipped with a proper isometric $\Gamma$-action, then $L^2(E)$ is a standard admissible $X$-$\Gamma$-module. For more details, see chapter 4 of \cite{yu}.

	\begin{definition}\label{def loc}
		Let $H_X$ be a standard $X$-$\Gamma$-module, and let $ T \colon H_X \rightarrow H_X $ be a bounded linear operator.
		\begin{enumerate}
			\item [(1)] 
			The support of $T$ , denoted $\operatorname{supp}(T)$ , consists of all points $ (x,y) \in X\times X  $ such that for all open neighbourhoods $U$ of $x$ and $V$ of $y$ 
			$$ \rho(\chi_{U}) T \rho(\chi_{V}) \not= 0 . $$
			where $\chi_{U}$ and $\chi_{V}$ are the characteristic functions of $U$ and $V$ respectively. 
			\item [(2)]
			If $X$ is moreover a metric space with metric $d$, define the propagation speed of $T$, denoted $\mathrm{prop}(T)$, by the diameter of its support, i.e.
            $$ \mathrm{prop}(T) = \sup \{ d(x,y) : (x,y)\in \mathrm{supp}(T) \}. $$
            \item [(3)]
			$T$ is said to be locally compact if both $\rho(f)T$ and $T\rho(f)$ are compact for all $ f \in C_0(X) $.
		\end{enumerate}
	\end{definition}
	
	\begin{definition}
		Let $H_X$ be a standard $X$-$\Gamma$-module, the $\Gamma$-equivariant localization algebra of $X$, denoted  $C_L^*(X)^{\Gamma}$, is the $C^*$-algebra generated by the collection of all bounded functions $ T_{t} $ from $ [ 1, \infty ) $ to $ B(H_{X}) $ such that
		\begin{enumerate}
			\item [(1)]
			for any $\Gamma$-compact subset $K$ of $X$ and denoted $\chi_{K}$ as the characteristic function of $K$ , there exists $t_K\geq 1$ such that for all $ t\geq t_K $, the operators
			$$ \chi_{K} T_{t} \quad  \text{and} \quad  T_{t} \chi_{K} $$ 
			are compact, and the functions
			$$ t \mapsto \chi_{K} T_{t} \quad \text{and} \quad   t\mapsto T_{t} \chi_{K} $$ 
			are uniformly norm continuous when restricted to $  \left[ t_K , \infty \right) $;
			\item [(2)]
			for any open neighbourhood $U$ of the diagonal in $ X^{+} \times X^{+} $, where $X^{+}$ is the one point compactification of $X$ ,  there exists $t_U \geq 1$ such that for all $ t > t_U $ 
			$$ \operatorname{supp} (f(t)) \subseteq U . $$
			
		\end{enumerate}
	\end{definition}
	
	The localization algebra of $X$ is independent (up to isomorphisms) of the choice of standard $X$-$\Gamma$-modules.

    Then, we define the representable localization algebra.

    \begin{definition}\label{def rloc}
      The equivariant representable localization algebra of $X$ denoted  by $RC_L^*(X)^{\Gamma}$  is generated by all equivariant bounded and uniformly norm-continuous functions $ f \colon \left[ 1 , \infty \right) \rightarrow B(H_X) $ such that
	   \begin{enumerate}
			\item [(1)]
			$ f \in C_{L}^{*}(X)^{\Gamma} $ 
            \item [(2)] 
            for any $ \Gamma $-compact subset $K$ of $X$, there exists $t_K>0$ such that $\chi_{K}T_{t}\chi_{K}=T_{t}$ for any $t>t_K$.
		\end{enumerate}
    \end{definition}

    It is direct from the definition that for a cocompact space, its localization algebra and representable localization algebra are identical.

	\subsection{Local index}
	In this subsection, we recall the definition of the local index for first-order elliptic differential operators. Under the isomorphism between the K-theory group of localization algebras and the K-homology group defined via KK-theory, the local index of a first-order elliptic differential operator is identified with its K-homology class. For more details, see \cite{yu1997localization} and \cite{qr}.
	
	Let $X$ be a complete Riemannian manifold with a proper, cocompact, and isometric action of a discrete group $\Gamma$, and let $D$ be a first-order $\Gamma$-equivariant elliptic differential operator acting on a Hermitian vector bundle $E$ over $X$. Let $H$ denote the Hilbert space of $L^2$-sections of the bundle $E$, a standard, non-degenerate, and admissible $X$-$\Gamma$-module. We first assume that $\operatorname{dim} X$ is even.

    In constructing the local index of $D$, we introduce a notation called the normalizing function. In this paper, we choose a specific normalizing function $\chi \colon (-\infty,\infty) \rightarrow (-1,1)$ defined as
	$$ \chi(x) = \frac{2}{\pi} \int_{0}^{x} \frac{1-\cos y}{y^2} dy. $$ 
	
	The function $\chi$ satisfies the following properties: 
    \begin{enumerate}
        \item [(1)]
        $\chi$ is a non-decreasing odd function, and $\lim\limits_{x \to \pm \infty} \chi(x) = \pm 1$.
        \item [(2)]
        The distributional Fourier transform of $\chi$ is supported in $[-1,1]$.
        \item [(3)]
        $1 - \chi(x)^2 < \frac{8}{\pi |x|}$.
    \end{enumerate}

    Consider the functional calculus $\chi(t^{-1}D)$, where $t \in [1, \infty)$. This serves as a multiplier of the localization algebra $C_{L}^{}(X)^{\Gamma}$ and is unitary modulo $C_{L}^{}(X)^{\Gamma}$. This gives rise to an element in $K_{0}(C_{L}^{*}(X)^{\Gamma})$, which we refer to as the local index of $D$.

    Next, we focus on the Euler characteristic operator $d + d^{*}$ and its local index. As we need to consider the deformation of $d + d^{*}$, we will also examine a deformation of its local index. Regarding the $\mathbb{Z}_{2}$-grading of $d + d^{*}$, $\chi(t^{-1}(d + d^{*}) + \lambda \hat{c}(\Xi))$ is an odd self-adjoint operator for any positive $t$ and $\lambda$, and it has the following form:
	$$ \chi(t^{-1}(d+d^*)+\lambda \hat{c}(\Xi)) = \begin{pmatrix}
		0 & U_{t,\lambda} \\
		V_{t,\lambda} & 0
	\end{pmatrix} . $$
	
	Set
	$$ W_{t,\lambda} = 
	\begin{pmatrix}
		1 & U_{t,\lambda} \\
		0 & 1
	\end{pmatrix}
	\begin{pmatrix}
		1 & 0 \\
		-V_{t,\lambda} & 0
	\end{pmatrix}
	\begin{pmatrix}
		1 & U_{t,\lambda} \\
		0 & 1
	\end{pmatrix}
	\begin{pmatrix}
		0 & -1 \\
		1 & 0
	\end{pmatrix} , 
    q= 
    \begin{pmatrix}
		1 & 0 \\
		0 & 0
	\end{pmatrix}
	$$
	and 
\begin{align*}
     P_{t,\lambda} &= W _{t,\lambda} \, q \, W _{t,\lambda}^{-1} \\
     &=\begin{pmatrix}
	1-(1-U_{t,\lambda}V_{t,\lambda})^2 & U_{t,\lambda}(1-V_{t,\lambda}U_{t,\lambda}) + (1-U_{t,\lambda}V_{t,\lambda})U_{t,\lambda}(1-V_{t,\lambda}U_{t,\lambda})  \\
		V_{t,\lambda}(1-U_{t,\lambda}V_{t,\lambda}) & (1-V_{t,\lambda}U_{t,\lambda})^2
	\end{pmatrix}.
\end{align*}

	The local index of $d+d^*$ can be represented by
	$$ [t\mapsto P_{t,\lambda}] - [q] \in K_{0}\left( C_{L}^{*}(M)^{\Gamma} \right). $$
	
	By straightforward computation, we know that the idempotent $P_{t,\lambda}$ and $q$ have the following properties:
	\begin{enumerate}
        \item [(1)] 
        $\| P_{t,\lambda} \| \leq 64 $ ;
        \item [(2)]
        $ \mathrm{prop}(P_{t,\lambda}) \leq 6t^{-1} $ ;
		\item [(3)]
		$\left\| P_{t,\lambda} -q \right\| \leq 4 \operatorname{max} \left\lbrace \left\| 1-U_{t,\lambda}V_{t,\lambda} \right\| ,  \left\| 1-V_{t,\lambda}U_{t,\lambda} \right\| \right\rbrace  $ ;
		\item [(4)]
		$\operatorname{max} \left\lbrace \left\| 1-U_{t,\lambda}V_{t,\lambda} \right\| ,  \left\| 1-V_{t,\lambda}U_{t,\lambda} \right\| \right\rbrace \leq \left\| 1 - \chi(t^{-1}(d+d^*)+\lambda \hat{c}(\Xi)) ^2 \right\|  $ .
	\end{enumerate}

	\section{Equivariant Euler Operator And Localization Trick}

	Recall that $\Gamma$ denotes a countable discrete group. Let $M$ be an even-dimensional cocompact proper smooth $\Gamma$-manifold without boundary, equipped with a $\Gamma$-invariant Riemannian metric. Moreover, $\Xi$ is an equivariant vector field on $M$ that is non-degenerate and transverse to the zero-section.

    \subsection{Localization trick}
    
    Our first objective is to prove Theorem \ref{1.2}, specifically, to demonstrate that the K-homology class $[d+d^*]$ depends solely on the restriction of $d+d^*$ in the neighborhood of $\mathrm{Zero(\Xi)}$. We refer to this approach as the localization trick.
   
    Before proving Theorem \ref{1.2}, we first establish a lemma concerning the spectrum of $d+d^* + \lambda \hat{c}(\Xi)$. For convenience, we denote $d+d^{*}$ by $D$ and $t^{-1}(d+d^*) + \lambda \hat{c}(\Xi)$ by $D_{t,\lambda}$.
   
   \begin{lemma}
        \label{lemma1}
        For any positive number $\lambda$ and $t \geq 1$, we have, in the sense of the ordering of self-adjoint operators,
		$$ D_{t,\lambda}^2 \geq t^{-2}D^2+\lambda^2 \inf_{x\in M} \left\| \Xi(x) \right\| - Ct^{-1}\lambda ,   $$
		where $C>0$ is a positive constant depends only on $M$ and $\Xi$. 

        In particular, restricting to $ M \backslash \bigcup_{j\in \mathbb{N}} N_{\varepsilon}(x_{j}) $ \footnote{The terms \emph{restricting on $ M \backslash \bigcup_{j\in \mathbb{N}} N_{\varepsilon}(x_{j}) $} mean \emph{restricting $D$ on the space of the forms supported in $ M \backslash \bigcup_{j\in \mathbb{N}} N_{\varepsilon}(x_{j}) $}. }, there exists a positive constant $a_{\varepsilon}>0$ depends only on $M$, $\Xi$ and $\varepsilon$ such that 
		$$ D_{t,\lambda}^2 \geq a_{\varepsilon}\lambda^2-C \lambda .   $$
    \end{lemma}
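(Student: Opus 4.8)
The plan is to expand the square $D_{t,\lambda}^2 = \bigl(t^{-1}D + \lambda\hat{c}(\Xi)\bigr)^2$ and control the cross term. First I would write
$$ D_{t,\lambda}^2 = t^{-2}D^2 + \lambda^2 \hat{c}(\Xi)^2 + t^{-1}\lambda\bigl(D\hat{c}(\Xi) + \hat{c}(\Xi)D\bigr). $$
For the Clifford term: since $\hat{c}(\Xi)$ is, up to the grading twist, right Clifford multiplication by the vector field $\Xi$, the relation $\hat{c}(v)\hat{c}(w) + \hat{c}(w)\hat{c}(v) = -2\langle v,w\rangle$ gives $\hat{c}(\Xi)^2 = -\|\Xi\|^2$ times the grading operator squared, i.e. $\hat{c}(\Xi)^2 = \|\Xi(x)\|^2$ as a pointwise nonnegative multiplication operator (the sign works out because of the $\hat{c}$ normalization with the grading twist — this should be checked but is routine). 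Hence $\lambda^2\hat{c}(\Xi)^2 \geq \lambda^2 \inf_{x\in M}\|\Xi(x)\|^2$ as self-adjoint operators. (I suspect the statement in the lemma has $\|\Xi(x)\|$ where it should be $\|\Xi(x)\|^2$; I would use $\|\Xi(x)\|^2$.)

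The main work is the anticommutator $D\hat{c}(\Xi) + \hat{c}(\Xi)D$. The key point is that $D = d + d^*$ is a first-order differential operator and $\hat{c}(\Xi)$ is an order-zero operator (bundle endomorphism), so their anticommutator is again an order-zero operator: the first-order parts cancel by the Leibniz/symbol computation. Concretely, writing $D$ in terms of left Clifford multiplication and the Levi-Civita connection, $[\,c(e_i)\nabla_{e_i}, \hat{c}(\Xi)\,]$ produces a term $c(e_i)\hat{c}(\nabla_{e_i}\Xi)$ (order zero, bounded pointwise by $\|\nabla\Xi\|$) plus a term involving $[c(e_i),\hat{c}(\Xi)]$ which vanishes since left and right Clifford multiplications commute. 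Therefore $D\hat{c}(\Xi) + \hat{c}(\Xi)D$ is a bounded operator with norm bounded by a constant $C = C(M,\Xi)$ depending only on $\sup_M\|\nabla\Xi\|$ and the geometry of $M$ (finite by cocompactness). This gives $t^{-1}\lambda(D\hat{c}(\Xi)+\hat{c}(\Xi)D) \geq -C t^{-1}\lambda$, and combining the three estimates yields the first displayed inequality. I expect this anticommutator computation — getting the cancellation of the leading symbol cleanly and identifying exactly what $C$ depends on — to be the main obstacle, though it is standard Bochner-type bookkeeping.

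For the second assertion, I would restrict all operators to forms supported in $M\setminus\bigcup_j N_\varepsilon(x_j)$ (as clarified in the footnote). On this region $\Xi$ is nowhere zero; since $\Xi$ is $\Gamma$-equivariant, the action is cocompact, and $\Xi$ is transverse to the zero section, the closure of $M\setminus\bigcup_j N_\varepsilon(x_j)$ meets only finitely many orbits worth of geometry, so $\inf\{\|\Xi(x)\|^2 : x \in M\setminus\bigcup_j N_\varepsilon(x_j)\}$ is attained and strictly positive; call it $a_\varepsilon > 0$, depending only on $M$, $\Xi$, $\varepsilon$. Dropping the nonnegative term $t^{-2}D^2 \geq 0$ and using $t^{-1} \leq 1$ (since $t\geq 1$) on the error term gives
$$ D_{t,\lambda}^2 \geq a_\varepsilon\lambda^2 - C\lambda $$
on this subspace, as claimed. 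One should note that restricting to a subspace of forms with prescribed support is compatible with the operator inequality — inequalities of self-adjoint operators pass to the compression onto any closed subspace — so no extra argument is needed there.
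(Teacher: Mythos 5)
Your proof is correct and follows essentially the same route as the paper: expand the square, observe that the anticommutator $D\hat c(\Xi)+\hat c(\Xi)D$ is an order-zero operator built from $\nabla\Xi$ (the leading symbols cancel because left and right Clifford multiplications commute), and bound it uniformly using cocompactness; the second inequality then follows by restricting the pointwise lower bound for $\|\Xi\|$ to the complement of the $\varepsilon$-neighborhoods and using $t\geq 1$. You are also right that the statement should read $\inf_{x\in M}\|\Xi(x)\|^{2}$ (and $a_\varepsilon=\inf\|\Xi\|^{2}$), since $\hat c(\Xi)^{2}=\|\Xi\|^{2}$; the paper carries this minor slip through its own computation.
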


    \begin{proof}
	Let $\omega \in \Omega(M) $ , say of $ \Omega^{even}(M) $. Then by straightforward calculation, for any $ x\in M $ we have that $D_{t,\lambda}^2 \omega (x) $ equals to 
	$$  t^{-2}D^2 \omega(x)+\lambda^2\left\|  \Xi(x) \right\|\omega (x)+ \lambda t^{-1}  D(\omega \cdot \Xi) (x) - \lambda t^{-1}  D(\omega) \cdot \Xi (x) ,$$
	where $\cdot$ denotes the Clifford multiplication.
	
	In terms of a local orthonormal frame $ e_1, \cdots, e_n $, we have 
	$$ D(\omega ) = \sum_{j} e_j \cdot \nabla_{e_j} \omega, $$
	and
	$$ D(\omega \cdot \Xi) = \sum_{j} e_j \cdot \nabla_{e_j} ( \omega \cdot \Xi ). $$
 
	Thus, we can obtain locally that
	\begin{align*}
		D_{t,\lambda}^2 \omega (x) = t^{-2}D^2 \omega (x)+\lambda^2\left\|  \Xi(x) \right\|\omega (x)+ \lambda t^{-1}  \left( \sum_{j} e_j \cdot \omega \cdot \nabla_{e_j} \Xi \right)  (x)  .  
	\end{align*}
	
	Denote $ T := D_{t,\lambda}^2-t^{-2}D^2 - \lambda^2 \hat{c}(\Xi), $
    and taking the inner product with $\omega$ in the equation above, we have:
	
	$$ \left| < T \omega , \omega >  \right| \leq  \left| \lambda t^{-1} < \sum_{j} e_j \cdot \omega \cdot \nabla_{e_j} \Xi ,\omega > \right| .   $$
	
	Since the $\Gamma$-manifold $M$ is proper and cocompact, we can view $M$ as a collection of copies of a compact part of $M$. Thus we obtain that $\nabla_{e_j} \Xi$ have a uniformly upper bound on $M$.
	
	It follows that
	$$ \left| < \sum_{j} e_j \cdot \omega \cdot \nabla_{e_j} \Xi ,\omega > \right|  \leq C \left\| \omega \right\|^2 , $$
	where $C$ is a positive constant depends only on the dimension of $M$ and upper bound of $\nabla_{e_j} \Xi$. 
	
	This means 
	$$ T \geq -Ct^{-1}\lambda . $$
	
	A similar argument applies if $\omega$ is a section of $\operatorname{Cliff}(TM)^{-}$. This completes the first part of the proof for Lemma~\ref{lemma1}. 
 
	Then, restricting to $ M \backslash \bigcup_{j\in \mathbb{N}} N_{\varepsilon}(x_{j}) $, we denote 
    $$a_{\varepsilon} :=\inf_{ x \in M \backslash \bigcup_{j\in \mathbb{N}} N_{\varepsilon}(x_{j})} \left\| \Xi(x) \right\|. $$  
    It is obvious that $a_{\varepsilon}>0$.
	\end{proof}
	
	We will now establish a lemma regarding the homotopy of equivariant idempotents in a Banach algebra.

    \begin{lemma}
    \label{lemma2}
        Let $A$ be a unital Banach algebra and $H$ a subgroup of the automorphism group of $A$. If $e$ and $f$ are $H$-invariant idempotents in $A$ and $ \| e - f \| < 1 / \| 2e-1 \| $ , then there is a $H$-invariant invertible element $v$ in $A$ with $ vev^{-1} =f $.
    \end{lemma}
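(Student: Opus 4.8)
The plan is to follow the classical argument that two sufficiently close idempotents in a Banach algebra are conjugate, and simply observe that every element produced in that argument is built functorially from $e$ and $f$ by polynomial (or holomorphic functional-calculus) expressions, hence is automatically $H$-invariant. First I would set $v := fe + (1-f)(1-e)$, the standard interpolating element. A direct computation gives $ve = fe = fv$ and $(1-f)v = (1-f)(1-e) = v(1-e)$, so that $ve = fv$; thus once $v$ is invertible we immediately get $vev^{-1} = f$. Since $e$ and $f$ are $H$-invariant and each $\varphi \in H$ is a unital algebra automorphism, $\varphi(v) = \varphi(f)\varphi(e) + (1-\varphi(f))(1-\varphi(e)) = fe + (1-f)(1-e) = v$, so $v$ is $H$-invariant; and if $v$ is invertible then $v^{-1}$ is $H$-invariant as well, because $\varphi(v^{-1}) = \varphi(v)^{-1} = v^{-1}$.

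The remaining point is to show that the hypothesis $\|e-f\| < 1/\|2e-1\|$ forces $v$ to be invertible. Here one writes $v = 1 - (e-f)(2e-1)$: indeed, expanding $fe + (1-f)(1-e) = 1 - e - f + fe + ef$ and comparing with $1 - (e-f)(2e-1) = 1 - (2e^2 - e - 2fe + f) = 1 - e - f + 2fe$ using $e^2 = e$ shows the two agree once one also uses $f^2=f$; I would double-check this identity carefully, as the precise algebraic normal form is exactly what makes the norm estimate work. Granting it, $\|(e-f)(2e-1)\| \le \|e-f\|\,\|2e-1\| < 1$ by hypothesis, so $v = 1 - (e-f)(2e-1)$ is invertible via the Neumann series $v^{-1} = \sum_{k\ge 0}\bigl((e-f)(2e-1)\bigr)^k$, which converges in $A$. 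Each partial sum is $H$-invariant, and since $H$ acts by isometric (or at least bounded) automorphisms the limit $v^{-1}$ is $H$-invariant, consistent with the observation above.

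The only genuine subtlety — and the step I would be most careful about — is the bookkeeping in the identity $v = 1 - (e-f)(2e-1)$ and, relatedly, making sure the automorphisms in $H$ are unital and norm-nonincreasing (or at least bounded, so that Neumann convergence is preserved); in the application $A$ will be a $C^*$-algebra or a $C^*$-algebra of functions into $B(H_X)$ and $H$ will act by $*$-automorphisms, so these hypotheses hold automatically. Everything else is a routine verification: $v$ intertwines $e$ and $f$, $v$ and $v^{-1}$ are $H$-invariant because they are constructed naturally from $H$-invariant data, and the norm bound gives invertibility. This yields the $H$-invariant invertible $v$ with $vev^{-1} = f$, completing the proof.
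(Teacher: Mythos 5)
Your proof is correct and follows essentially the same route as the paper: the same interpolating element (yours is $v = fe + (1-f)(1-e) = 1 - e - f + 2fe$, the paper's is $v = 1 - e - f + 2ef$), the same key identity $1 - v = \pm(e-f)(2e-1)$, the same Neumann-series invertibility, and the same observation that naturality under automorphisms gives $H$-invariance of $v$ and hence of $v^{-1}$. One small bonus: your choice of $v$ actually gives $vev^{-1} = f$ on the nose, whereas the paper's $v$ (with $2ef$ rather than $2fe$) satisfies $ev = vf$ and therefore $v^{-1}ev = f$, so the paper's concluding sentence has the inverse on the wrong side; also, your parenthetical concern about $H$ acting by bounded automorphisms is unnecessary, since $\varphi(v^{-1})=\varphi(v)^{-1}=v^{-1}$ is purely algebraic once $v$ is $H$-invariant and invertible.
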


    \begin{proof} 
        Set $v=1-e-f+2ef$ , then $v$ is $H$-invariant since $e$ and $f$ are $H$-invariant idempotents, and we can calculate that
    \begin{align*}
        ev&=e-e^2-ef+2e^2f=ef , \\
        vf&=f-ef-f^2+2ef^2=ef .
    \end{align*}
    Thus $ev=vf$. 
    
    We noticed that
    \begin{align*}
        1-v=e+f-2ef=2e^2-e+f-2ef=(2e-1)(e-f).
    \end{align*} 
    So when $ \| e - f \| < 1 / \| 2e-1 \| $ , we have $ \| 1-v \|<1 $ and thus $v$ is invertible. So $ev=vf$ leads to  $ vev^{-1} =f $.
    
    This completes the proof of Lemma \ref{lemma2}.
    \end{proof}

    Utilizing the two lemmas established previously, we will now commence the proof of Theorem \ref{1.2}.

    \begin{proof}[Proof of Theorem \ref{1.2}]
		First, we will use Lemma 3.1 to estimate the norm of $P_{t,\lambda}-q$. We decompose $ L^2\Omega^*(M) $ into the direct sum of the following three Hilbert spaces. 
    \begin{align*}
        H_0 :=& L^2 (\Omega ^* ( \bigcup_{j \in \mathbb{N}} N_{\varepsilon/4}(x_{j}) )) , \\
        H_1 :=& L^2( \Omega ^* ( \bigcup_{j \in \mathbb{N}} N_{\varepsilon/2}(x_{j})) \backslash \bigcup_{j \in \mathbb{N}} N_{\varepsilon/4}(x_{j})  ), \\
        H_2 :=& L^2( \Omega ^* ( M \backslash \bigcup_{j \in \mathbb{N}} N_{\varepsilon/2}(x_{j}) )). 
    \end{align*}

	Restricting to $ H_1 \oplus H_2 $ , we have 
	$$ D_{t,\lambda} ^2 \geq \lambda^2-C\lambda . $$
	Thus, for any positive number $\delta$, we choose $\lambda$ sufficiently large to make 
    $$ D_{t,\lambda} ^2 \geq  256\delta^{-2} . $$
	
	Then we could estimate the spectrum of $\chi(D_{t,\lambda})$ restricting on $ H_1 \oplus H_2 $. To do this, we need a lemma that addresses the relationship between the functional calculus of an operator and the functional calculus of its restriction, which follows essentially from \cite[Lemma 2.5]{restr}. This lemma states that we can estimate the spectrum of $\chi(D_{t,\lambda})|_{H_1 \oplus H_2}$ by considering the spectrum of $\chi(D_{t,\lambda}|_{H_1 \oplus H_2})$ since the support of the Fourier transformation of $\chi$ is compact.

    Since we choose a specific $\chi$ as
    $$ \chi(x) = \frac{2}{\pi} \int_{0}^{x} \frac{1-\operatorname{cos}y}{y^2} dy, $$ 
    which satisfies that $ 1-\chi(x)^2 <\frac{8}{\pi \left| x \right| } $, we have
	$$ 1- \chi(D_{t,\lambda})^2 < \frac{8}{\pi} \cdot \frac{\delta}{16} < \frac{\delta }{4}, $$
    restricting to $H_{1}\oplus H_{2}$.

	Recall the construction of the local index of $D$. We have that $\left\| P_{t,\lambda} -q \right\| < \delta  $ restricting on $ H_1 \oplus H_2 $, therefore $P_{t,\lambda} -q$ can be perturbed by an operator with norm less than $\delta$ to be an equivariant idempotent which is supported in $ H_0 \oplus H_1 $ by the following step.
	
	We write $P_{t,\lambda}-q$ as a $3\times3$ matrix on $H_0 \oplus H_1 \oplus H_2$ as 
    \begin{align*}
        \begin{bmatrix}
            p_{11}(t) & p_{12}(t) & p_{13}(t) \\
            p_{21}(t) & p_{22}(t) & p_{23}(t) \\
            p_{31}(t) & p_{32}(t) & p_{33}(t)
        \end{bmatrix}
        -
        \begin{bmatrix}
            q_{11} & q_{12} & q_{13} \\
            q_{21} & q_{22} & q_{23} \\
            q_{31} & q_{32} & q_{33}
        \end{bmatrix}
    \end{align*}
	for some fixed $\lambda$. Since the propogation speed of $q$ is 0 and of the propogation speed $P_{t,\lambda}$ converges to 0 as $t\rightarrow \infty$, so $q_{ij}=0$ for any $i\not=j$ and we can choose $t$ large enough to make $p_{13}(t)=p_{31}(t)=0$. Thus, the matrix below can be simplified as
    \begin{align*}
        \begin{bmatrix}
            p_{11}(t) & p_{12}(t) & 0 \\
            p_{21}(t) & p_{22}(t) & p_{23}(t) \\
            0 & p_{32}(t) & p_{33}(t)
        \end{bmatrix}
        -
        \begin{bmatrix}
            q_{11} & 0 & 0 \\
            0 & q_{22} & 0 \\
            0 & 0 & q_{33}
        \end{bmatrix}
    \end{align*}

    Since $\left\| P_{t,\lambda} -q \right\| < \delta  $ restricting on $ H_1 \oplus H_2 $ , $ \| p_{23}(t) \| , \| p_{32}(t) \| \leq \delta $ . Then, the norm of 
    \begin{align*}    
        \begin{bmatrix}
            p_{11}(t) & p_{12}(t) & 0 \\
            p_{21}(t) & p_{22}(t) & p_{23}(t) \\
            0 & p_{32}(t) & p_{33}(t)
        \end{bmatrix}
        -
        \begin{bmatrix}
            p_{11}(t) & p_{12}(t) & 0 \\
            p_{21}(t) & p_{22}(t) & 0 \\
            0 & 0 & p_{33}(t)
        \end{bmatrix}
    \end{align*}
    is no more than $2\delta$.

    Therefore, we have 
    \begin{align*}
        \frac{1}{\|2P_{t,\lambda}-1\|} \geq \frac{1}{2\|P_{t,\lambda}\|+1} \geq \frac{1}{2\times64+1} = \frac{1}{129} .
    \end{align*}

    So we can choose $\delta < 1 / 258 $ to make the $2\delta < 1 / \|2P_{t,\lambda}-1\|$. Then by Lemma3.2 , $P_{t,\lambda}-q$ is equivariantly homotopic to 
    \begin{align*}
        \begin{bmatrix}
            p_{11}(t) & p_{12}(t) & 0 \\
            p_{21}(t) & p_{22}(t) & 0 \\
            0 & 0 & p_{33}(t)
        \end{bmatrix}
        -
        \begin{bmatrix}
            q_{11} & 0 & 0 \\
            0 & q_{22} & 0 \\
            0 & 0 & q_{33}
        \end{bmatrix}.
    \end{align*}

    We denote
    \begin{align*}
        P|_{ H_{0}\oplus H_{1} }
        :=
        \begin{bmatrix}
            p_{11}(t) & p_{12}(t) & 0 \\
            p_{21}(t) & p_{22}(t) & 0 \\
            0 & 0 & 0
        \end{bmatrix}
        -
        \begin{bmatrix}
            q_{11} & 0 & 0 \\
            0 & q_{22} & 0 \\
            0 & 0 & 0
        \end{bmatrix}
    \end{align*}
    and
    \begin{align*}
        P|_{ H_{2} }
        :=
        \begin{bmatrix}
            0 & 0 & 0 \\
            0 & 0 & 0 \\
            0 & 0 & p_{33}(t)
        \end{bmatrix}
        -
        \begin{bmatrix}
            0 & 0 & 0 \\
            0 & 0 & 0 \\
            0 & 0 & q_{33}
        \end{bmatrix}.
    \end{align*}
    
    We noticed that the support of $ P|_{ H_{0}\oplus H_{1} } $
    lies in the $ \bigcup_{j \in \mathbb{N}} N_{\varepsilon/2}(x_{j}) $ and thus lies in a compact subset of $ \bigcup_{j \in \mathbb{N}} N_{\varepsilon}(x_{j}) $. 
    
    Therefore, the K-theory class of $ P|_{ H_{0}\oplus H_{1} } $
    lies in the image of 
	$$ i_{*} \left( K_{0}\left( RC_{L}^{*} (\bigcup_{j \in \mathbb{N}} N_{\varepsilon}(x_{j})) ^{\Gamma}\right)  \right) \subseteq K_{0}\left( C_{L}^{*}(M) ^{\Gamma}\right) .  $$

    Also since $ \| p_{33}(t) - q_{33} \| \leq \delta $, we could choose $\delta$ small enough to make $ \| p_{33}(t) - q_{33} \| \leq 1 / \| 2q_{33}-1 \| $ , which means that the K-theory class of $ P|_{H_{2}} $ is $0$.
    
    This completes the proof of Theorem \ref{1.2}. 
	\end{proof}

    \begin{definition}
    \label{def}
        Using the isomorphism
        $$ 
        K_{0}\left( RC_{L}^{*} (\bigcup N_{\varepsilon} ( x_{j} ))^{\Gamma} \right) 
        \cong 
        \bigoplus_{i=1}^{m} K_{0}\left( RC_{L}^{*} ( N_{\varepsilon} ( x_{j} ))^{\Gamma_{j}} \right) , $$
        we decompose the pre-image of the K-theory class of $ P|_{ H_{0}\oplus H_{1} } $ under $i_{*}$ as
        $$ \sum_{j=1}^{m} [d+d^{*}]|_{N_{\varepsilon}(x_{j})} \in \bigoplus_{i=1}^{m} K_{0}\left( RC_{L}^{*} ( N_{\varepsilon} ( x_{j} ))^{\Gamma_{j}} \right) , $$
        where $ [d+d^{*}]|_{N_{\varepsilon}(x_{j})} \in K_{0}\left( RC_{L}^{*} ( N_{\varepsilon} ( x_{j} ))^{\Gamma_{j}} \right) $ for each $j = 1, \cdots , m $.

    \end{definition}

    \textbf{Remark:} The class \( [d+d^{*}]|_{N_{\varepsilon}(x_{j})} \in K_{0}\left( RC_{L}^{*} ( N_{\varepsilon} ( x_{j} ))^{\Gamma_{j}} \right) \) is actually determined by the operator \( t^{-1}(d+d^{*}) + \lambda \hat{c}(\Xi) \) restricted to \( H_{0}\oplus H_{1} = L^{2} \Omega^{*} ( \bigcup_{j \in \mathbb{N}} N_{\varepsilon/2}(x_{j})) \). However, to emphasize the operator \( d+d^{*} \) on which we focus and the neighborhood \( N_{\varepsilon}(x_{j}) \) where we localized our question, we use the notation \( [d+d^{*}]|_{N_{\varepsilon}(x_{j})} \).

    \subsection{ Local information }

    Now, it has been proven that the K-homology class \( [d+d^*] \) depends only on the restriction of \( d+d^* \) to the neighborhood \( \bigcup_{j \in \mathbb{N}} N_{\varepsilon}(x_{j}) \) of the zero set of \( \Xi \). Next, we will examine how much information the map \( N_{\varepsilon} (x_{j}) \hookrightarrow M \) conveys for each \( j = 1, \cdots, m \).
    
    In this section, our primary goal is to prove Theorem \ref{1.3}, which requires us to consider the relationship between the class of the Bott operator in $ K_{0}^{\Gamma_{j}}(\mathbb{R}^n) $ and $ \mathrm{ind} (\Xi,x_{j}) \cdot [d+d^{*}]|_{N_{\varepsilon}(x_{j})} $.

    First, we should choose a specific local coordinate $ ( y_1 , \cdots , y_n ) $ on $ N_{\varepsilon} ( x_{j} ) $ to identify $ N_{\varepsilon} ( x_{j} ) $ as $ \mathbb{R}^n $ with a proper $\Gamma_{j}$ action. And we can write $\Xi$ as
    $$ - y_{1} \frac{ \partial }{ \partial y_{1} } - \cdots - y_{r} \frac{ \partial }{ \partial y_{r} } + y_{r+1} \frac{ \partial }{ \partial y_{r+1} } + \cdots + y_{n} \frac{ \partial }{ \partial y_{n} },
    \footnote{ Here we view  $\Xi$ as the gradient of a Morse function, more details can be found in \cite{wass}.  } $$
    for some $ r \in \{ 1 , \cdots , n \} $. So we can locally view $\Xi$ as the vector field above on $ \mathbb{R}^n $.

    The Bott operator on $\mathbb{R}^n$, denoted by $C$, is the operator of right Clifford multiplication by 
    $$ y_{1} \frac{ \partial }{ \partial y_{1} } + \cdots + y_{r} \frac{ \partial }{ \partial y_{r} } + y_{r+1} \frac{ \partial }{ \partial y_{r+1} } + \cdots + y_{n} \frac{ \partial }{ \partial y_{n} },
    $$
    twisted by the grading.

    \begin{lemma}
    \label{H}
        The Bott operator on $\mathbb{R}^n$ determines a class $ [C] $ in the K-theory group $ K_{\Gamma_{j}}^{0} ( \mathbb{R}^n ) $, we have
        $$ [ \hat{c} ( \Xi ) ] = (-1)^{r} \cdot [C] = \mathrm{ind} (\Xi, x_{j}) \cdot [C]. $$
    \end{lemma}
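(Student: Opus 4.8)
\textbf{Proof proposal for Lemma \ref{H}.}

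The plan is to realize both operators $\hat{c}(\Xi)$ and $C$ as Clifford multiplications by vector fields that differ only by a sign in the first $r$ coordinates, and to connect them through an explicit equivariant homotopy that tracks the index sign. First I would observe that the vector field $\Xi = -y_1\partial_{y_1} - \cdots - y_r\partial_{y_r} + y_{r+1}\partial_{y_{r+1}} + \cdots + y_n\partial_{y_n}$ and the vector field $\Xi_0 = y_1\partial_{y_1} + \cdots + y_n\partial_{y_n}$ defining $C$ are related by the linear involution $\sigma$ of $\mathbb{R}^n$ that flips the sign of the first $r$ coordinates: $\sigma^*\Xi_0 = -\Xi$ in the appropriate sense, or more precisely $\Xi$ is the image of $\Xi_0$ under the reflection. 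Since each $N_\varepsilon(x_j) \cong \mathbb{R}^n$ carries a linear $\Gamma_j$-action (coming from the isotropy representation $d_{x_j}$ on $T_{x_j}M$), and $\Xi$ is $\Gamma_j$-equivariant, I must check that the decomposition into stable/unstable coordinates can be chosen $\Gamma_j$-invariantly — this follows because $d_{x_j}\Xi$ commutes with the isotropy representation, so its negative and positive eigenspaces are $\Gamma_j$-subrepresentations, and one can choose orthonormal coordinates respecting this splitting.

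Next I would compute the effect of a single coordinate reflection on the K-theory class. The key point is that right Clifford multiplication by $-y_k\partial_{y_k}$ versus $+y_k\partial_{y_k}$ in the $k$-th slot, twisted by the grading, represents classes in $K^0_{\Gamma_j}(\mathbb{R})$ (or the relevant factor) that differ by the generator's sign; equivalently, the reflection $y_k \mapsto -y_k$ acts on $K^0_{\Gamma_j}(\mathbb{R}^n) \cong \tilde{K}^0_{\Gamma_j}(S^n)$, and on the Bott class it acts by $-1$ per reflected coordinate because a reflection of $S^1$ (suspension coordinate) reverses orientation. Carrying this out for all $r$ reflected coordinates multiplies the Bott class by $(-1)^r$, giving $[\hat{c}(\Xi)] = (-1)^r [C]$. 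To make this rigorous in the equivariant localization-algebra framework rather than via topological K-theory of spheres, I would instead argue directly: the operators $\hat{c}(\Xi)$ and $C$ are both fiberwise Clifford multiplications, and one can write down a path of nondegenerate (off the origin) equivariant vector fields interpolating — but since any such path must pass through degeneracy when flipping a sign, the honest statement is that flipping one coordinate changes the symbol class by the sign, which is exactly the content of the local index computation $\mathrm{ind}(\Xi, x_j) = \mathrm{sign}\,\det(d_{x_j}\Xi) = (-1)^r$.

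The final identification $(-1)^r = \mathrm{ind}(\Xi, x_j)$ is then immediate from the definition: with $\Xi$ in the normal form above, $d_{x_j}\Xi = \mathrm{diag}(-1,\ldots,-1,1,\ldots,1)$ with $r$ entries equal to $-1$, so $\det(d_{x_j}\Xi) = (-1)^r$ and hence $\mathrm{ind}(\Xi,x_j) = \mathrm{sign}((-1)^r) = (-1)^r$. The main obstacle I anticipate is the middle step: making precise, within the equivariant K-theory of the localization algebra (as opposed to a hands-on computation with $\tilde{K}^0_{\Gamma_j}(S^n)$ and orientation reversal), why reflecting a coordinate multiplies the class of the Clifford-multiplication operator by $-1$, and ensuring the $\Gamma_j$-equivariance is preserved throughout — in particular that the chosen splitting of $\mathbb{R}^n$ into the $\pm$-eigenspaces of $d_{x_j}\Xi$ is genuinely a splitting of $\Gamma_j$-representations so that each single-coordinate reflection (or rather the block reflection on the whole $r$-dimensional stable subspace) is an equivariant map to which one can apply the standard Bott-periodicity sign computation.
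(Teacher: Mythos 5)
Your strategy — track a sign of $-1$ for each direction in which $\Xi$ disagrees with the Bott field, arriving at $(-1)^r = \mathrm{ind}(\Xi,x_j)$ — is the same one the paper uses. The paper states it as ``consider only $r=0$ and $r=1$; for $r=1$ verify $[\hat c(\Xi)\oplus C]=0$ directly,'' which is just the same per-coordinate sign bookkeeping phrased as a base-case reduction. So on the main line you and the paper agree; the final step, $(-1)^r = \mathrm{sign}\det(d_{x_j}\Xi)$, is spelled out identically.

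Where you add genuine content is in making the equivariance explicit: you observe that because $\Xi$ is $\Gamma$-equivariant, $d_{x_j}\Xi$ commutes with the isotropy action, so the $\pm$-eigenspaces $V^{\mp}$ are $\Gamma_j$-subrepresentations; consequently the reflection you want to apply is the \emph{block} reflection $-\mathrm{id}_{V^-}\oplus\mathrm{id}_{V^+}$ rather than single-coordinate flips, which need not be equivariant. The paper's terse ``reduce to $r=0,1$'' quietly glosses over exactly this, and a literal one-coordinate-at-a-time reduction does not go through when $\Gamma_j$ mixes the $y_k$. Your flagging of this is the right instinct.

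The gap both you and the paper share — and the one you already named as your ``main obstacle'' — is the precise justification that the equivariant block reflection $\rho$ multiplies the Bott class of $V=V^-\oplus V^+$ by $\det(\rho)=(-1)^r$ rather than by some other square root of unity in $R(\Gamma_j)$. Your sphere/orientation-reversal picture is the right non-equivariant heuristic, but it does not by itself pin down the $R(\Gamma_j)$-coefficient. The cleanest way to close this is to do what the paper suggests for the base case: work entirely with the explicit symbols, use multiplicativity $[\hat c(\Xi)]=[\hat c(-\xi^-)]\cdot[\hat c(\xi^+)]$, and verify $[\hat c(-\xi^-)\oplus\hat c(\xi^-)]=0$ in $K^0_{\Gamma_j}(V^-)$ by an explicit equivariant homotopy of symbols — noting that the doubled bundle $\Lambda^*V^-\oplus\Lambda^*V^-$ always admits the required off-diagonal perturbation since it is $\Gamma_j$-equivariantly the graded tensor of $\Lambda^*V^-$ with $\mathbb{C}^{1|1}$. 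Absent that explicit computation, the proposal remains a correct blueprint rather than a complete proof, but the blueprint matches the paper's own.
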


    This lemma can be proved by considering only the cases of $r=0$ and $r=1$; when $r=0$ this lemma is obvious, and when $r=1$ one can verify that $ [\hat{c}(\Xi) \oplus C]=0 $ directly by definition.
    
    Then, we can prove Theorem \ref{1.3}.

    \begin{proof}[Proof of Theorem \ref{1.3}]

       Let $ D_{AS} $ be the Aityah-Singer operator on $\mathbb{R}^n$. Like the non-equivariant case in \cite{yu}, the equivariant Poincar\'e duality on $ \mathbb{R}^n $ is just 
        $$ \otimes [ D_{AS} ]: K^{0}_{\Gamma_{j}} (\mathbb{R}^n)
        \rightarrow
        K_{0} ( RC_{L}^{*}(\mathbb{R}^n)^{\Gamma_{j}} ) ,
        [\alpha] \mapsto [ \alpha \otimes  D_{AS} ] . $$
        So the class of the $ \hat{c}(\Xi) $ in $ K^{0}_{\Gamma_{j}} (\mathbb{R}^n) $ can be identified as $$ [ \hat{c} (\Xi) ]\otimes [D_{AS}  ] \in K_{0} ( RC_{L}^{*}(\mathbb{R}^n)^{\Gamma_{j}} ).$$

        Next, we regard $ K_{0} ( RC_{L}^{*}( N_{\varepsilon}( x_{j} ) )^{\Gamma_{j}} ) $ as $ K_{0} ( RC_{L}^{*}(\mathbb{R}^n)^{\Gamma_{j}} ) $, and observe that the K-theory class
        $ [\hat{c} (\Xi)] \otimes [D_{AS}] $
        is the same as 
        $ [ \hat{c}(\Xi) \otimes 1 + 1 \otimes D_{AS} ]$ (\cite[Chapter 9]{yu} ) which is exactly $ [d+d^{*}+\hat{c}(\Xi) ].$

        Hence the discussion before leads to an isomorphism
        $$ \mathrm{PD}_{j} : K_{0}^{\Gamma_{j}}(\mathbb{R}^n) \stackrel{\cong}{\longrightarrow} K_{0}\left( RC_{L}^{*} (N_{\varepsilon}(x_{j}))^{\Gamma_{j}} \right), $$
        with $ \mathrm{PD}_{j} ( [C] ) = \mathrm{ind} (\Xi,x_{j}) \cdot [d+d^{*}]|_{N_{\varepsilon}(x_{j})} $.

    This completes the proof of Theorem \ref{1.3}.

    \end{proof}

 \section{Higher index map and Poincar\'e-Hopf Theorem on K-homology level}

    \subsection{Non-equivariant case}

    In this section, we begin by addressing a more straightforward, yet still significant, case: the non-equivariant case. By assuming the group $\Gamma$ is trivial, we reduce the problem discussed earlier to its non-equivariant form. We start by considering Bott periodicity. Since the dimension $n$ of $M$ is even, Bott periodicity provides the following result:

    \begin{lemma}[Bott periodicity]
    \label{BP_nonequivariant}
        There exists an isomorphism
        $$ \beta_{j} : \mathbb{Z} \cong K^0 ( \mathrm{pt} ) \xrightarrow[\text{Bott periodicity}]{\cong} K^{0}(\mathbb{R}^n), $$
        and this isomorphism maps the class of the trivial bundle over $ \mathrm{pt} $ in $K^0(\mathrm{pt})$ to the class of the Bott operator $ \hat{c}(\Xi) $ in $ K^{0} ( \mathbb{R}^{n}) $.
    \end{lemma}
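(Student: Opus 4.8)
The plan is to obtain this statement as a repackaging of classical (topological, non-equivariant) Bott periodicity together with the identification of the class $[\hat c(\Xi)]$ with a generator, for which Lemma~\ref{H} (in its non-equivariant specialization $\Gamma_j=\{e\}$) already does most of the work. Write $n=2k$, using that $n$ is even. Identifying $K^0(\mathbb{R}^n)$ with the compactly supported K-theory $\widetilde{K}^0(S^n)$ of the one-point compactification, Bott periodicity says that the external product with the generator of $\widetilde{K}^0(S^2)$ induces an isomorphism $\mathbb{Z}\cong K^0(\mathrm{pt})\xrightarrow{\ \cong\ }\widetilde{K}^0(S^{2k})=K^0(\mathbb{R}^n)$. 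So the only substantive point is that the class $[\hat c(\Xi)]$ determined by the Bott operator is a generator of this $\mathbb{Z}$; once that is known, one defines $\beta_j$ to be any isomorphism $\mathbb{Z}\to K^0(\mathbb{R}^n)$ sending $1$ to that class, and there is nothing more to prove.

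To see that $[\hat c(\Xi)]$ generates, first recall the clutching/operator model of $K^0(\mathbb{R}^n)$: a class is given by a $\mathbb{Z}_2$-graded Hermitian bundle together with an odd self-adjoint bundle endomorphism which is invertible outside a compact set. With $\mathbb{R}^n=\mathbb{C}^k$ (standard complex structure) and the bundle $\Lambda^*\mathbb{C}^k$ with its parity grading, right Clifford multiplication by the radial field $\sum_i y_i\,\partial_{y_i}$ twisted by the grading is such an endomorphism --- at $y\neq 0$ it squares to $\|y\|^2\cdot\mathrm{id}$ --- and this is precisely the Bott operator $C$. By Lemma~\ref{H} we have $[\hat c(\Xi)]=(-1)^r[C]=\mathrm{ind}(\Xi,x_j)\cdot[C]$, so it is enough to show $[C]$ is a generator. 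For this I would use multiplicativity of the external product: under $\Lambda^*\mathbb{C}^k\cong(\Lambda^*\mathbb{C})^{\widehat{\otimes}k}$ the radial Clifford endomorphism is the graded tensor sum of $k$ copies of its $k=1$ avatar on $\mathbb{R}^2$, so $[C]$ is the $k$-fold external power of the rank-two case. In rank two, $\Lambda^*\mathbb{C}=\mathbb{C}\oplus\mathbb{C}$ and the odd part of the endomorphism is, up to a positive normalization, the map $y\mapsto y$ (or $\bar y$) on $\mathbb{C}\setminus\{0\}$, whose winding number around a large circle is $\pm1$; this winding number is exactly the image under the clutching isomorphism $\pi_1(\GL_1(\mathbb{C}))\cong\widetilde{K}^0(S^2)$, so the $k=1$ class generates, and hence so does $[C]$.

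Alternatively, and perhaps more in the spirit of the rest of the paper, one can bypass the explicit clutching computation by using the Poincar\'e duality $\otimes[D_{AS}]$ already invoked in the proof of Theorem~\ref{1.3}: it is an isomorphism, it sends $[C]$ to the class of the deformed Atiyah--Singer operator $[C\otimes 1+1\otimes D_{AS}]$, and via the Thom isomorphism this class is identified with the fundamental K-homology class of $\mathbb{R}^n$ represented by $D_{AS}$, a standard generator of $K_0(\mathbb{R}^n)\cong\mathbb{Z}$; transporting back through the duality isomorphism shows $[C]$ generates $K^0(\mathbb{R}^n)$, and the conclusion follows as before.

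The main obstacle is one of conventions rather than of substance: matching ``right Clifford multiplication twisted by the grading'' with the complex structure on $\mathbb{R}^{2k}=\mathbb{C}^k$, the parity grading on $\Lambda^*$, and a fixed normalization of the Bott generator, so that no spurious unit is introduced. Since, however, the lemma only asserts the \emph{existence} of an isomorphism $\beta_j$ with $\beta_j(1)=[\hat c(\Xi)]$, and Lemma~\ref{H} has already absorbed the sign $(-1)^r$, it suffices to know that $[\hat c(\Xi)]$ is a generator of $K^0(\mathbb{R}^n)\cong\mathbb{Z}$ --- which is exactly what the winding-number (or Poincar\'e-duality) computation above delivers.
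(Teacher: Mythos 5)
The paper gives no proof of this lemma at all---it is treated as a standard fact, with only the equivariant analogue Lemma~\ref{BP_equivariant} receiving a citation. Your proposal therefore fills in what the authors take for granted, and both of your routes are standard and correct: the clutching/graded-external-power computation, reducing $[C]$ to the $k$-fold power of a winding-number-one loop in $\pi_1(\GL_1(\mathbb{C}))\cong\widetilde K^0(S^2)$, and the Poincar\'e-duality argument via $\otimes[D_{AS}]$, each establish that the class of the radial Bott operator generates $K^0(\mathbb{R}^n)\cong\mathbb{Z}$. You are also right to flag the convention issue, and it is slightly more than a matter of taste: the standard Bott-periodicity isomorphism sends $1$ to $[C]$, whereas by Lemma~\ref{H} one has $[\hat c(\Xi)]=(-1)^r[C]$, so the claim that the Bott-periodicity arrow itself hits $[\hat c(\Xi)]$ is off by a sign when $r$ is odd. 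Indeed, comparing with the proof of Theorem~\ref{main}, where $\varphi_j=\mathrm{PD}_j\circ\beta_j$ and $\varphi_j([I_j])=\mathrm{ind}(\Xi,x_j)\,[d+d^*]|_{N_\varepsilon(x_j)}$, while Theorem~\ref{1.3} gives $\mathrm{PD}_j([C])=\mathrm{ind}(\Xi,x_j)\,[d+d^*]|_{N_\varepsilon(x_j)}$, one sees that what is actually used downstream is $\beta_j(1)=[C]$; the appearance of $\hat c(\Xi)$ instead of $C$ in the lemma statement appears to be a slip. Your charitable reading (taking the lemma to assert only the existence of some isomorphism with $\beta_j(1)=[\hat c(\Xi)]$) makes your argument internally consistent, but be aware that the stronger literal reading of the statement would fail for $r$ odd, and that the main theorem in fact needs the version with $C$.
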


    This leads us to the following result:

    \begin{theorem}
    \label{nonequivariant}
        The K-homology class of the Euler characteristic operator $d + d^{*}$ can be understood in two distinct ways:
        \begin{enumerate}
            \item The K-homology class of $d + d^{*}$ lies in the image of $\chi(M) \in \mathbb{Z}$ under a group homomorphism $ \mathbb{Z} \rightarrow K_{0}(M) $, which will be constructed in the proof.
            \item The index of the Euler characteristic operator, i.e., the Euler characteristic of $M$, is given by the sum of the indices of the zero points of $\Xi$:
            $$ \chi(M) = \sum_{j=1}^{m} \mathrm{ind} (\Xi, x_{j}). $$
            \end{enumerate}
    \end{theorem}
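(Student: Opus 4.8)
The setting is that $\Gamma$ is trivial, so $M$ is a closed even-dimensional Riemannian manifold, every stabilizer $\Gamma_j$ is trivial, $C^*(\Gamma_j)=\mathbb{C}$, $K_0(C^*(\Gamma_j))\cong\mathbb{Z}$, and the class $[I_j]$ of the trivial representation is the generator $1$; we also use the canonical isomorphism $K_0(C_L^*(M))\cong K_0(M)$ to pass between a local index and a $K$-homology class. The plan is to assemble the pieces already in hand. By Theorem~\ref{1.2} together with the direct-sum decomposition of Definition~\ref{def}, $[d+d^*]=\sum_{j=1}^m i_*\circ r_*\bigl([d+d^*]|_{N_\varepsilon(x_j)}\bigr)$. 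By Theorem~\ref{1.3} (with $\Gamma_j$ trivial) the Poincar\'e duality isomorphism $\mathrm{PD}_j$ sends the Bott class $[C]$ to $\mathrm{ind}(\Xi,x_j)\cdot[d+d^*]|_{N_\varepsilon(x_j)}$, and since $\mathrm{ind}(\Xi,x_j)=\pm1$ this inverts to $[d+d^*]|_{N_\varepsilon(x_j)}=\mathrm{ind}(\Xi,x_j)\cdot\mathrm{PD}_j([C])$. By Lemma~\ref{BP_nonequivariant}, $[C]=\beta_j(1)$ with $\beta_j\colon\mathbb{Z}\cong K^0(\mathrm{pt})\xrightarrow{\cong}K^0(\mathbb{R}^n)$ Bott periodicity. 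Writing $\Psi_j:=i_*\circ r_*\circ\mathrm{PD}_j\circ\beta_j\colon\mathbb{Z}\to K_0(M)$, these combine to
$$[d+d^*]=\sum_{j=1}^m\mathrm{ind}(\Xi,x_j)\,\Psi_j(1),$$
which is the non-equivariant form of Theorem~\ref{main}.

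Next I would identify each $\Psi_j$ with the pushforward along the point inclusion $x_j\hookrightarrow M$. The relevant standard input (the non-equivariant case of \cite[Chapter 9]{yu}, already invoked in the proof of Theorem~\ref{1.3}) is that Poincar\'e duality on $\mathbb{R}^n$ carries the Bott generator of $K^0(\mathbb{R}^n)$ to the class of a point in $K_0(\mathbb{R}^n)\cong K_0\bigl(RC_L^*(N_\varepsilon(x_j))\bigr)$, i.e.\ to the image of $1\in K_0(\mathrm{pt})$ under $\{0\}\hookrightarrow\mathbb{R}^n\cong N_\varepsilon(x_j)$. By functoriality of the pushforward along $\mathrm{pt}\xrightarrow{x_j}N_\varepsilon(x_j)\hookrightarrow M$ we get $\Psi_j(1)=[x_j]$, the class of the point $x_j$ in $K_0(M)$. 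Assuming $M$ connected (otherwise run the argument component by component), all point inclusions into $M$ are homotopic, so $[x_1]=\cdots=[x_m]=:[x]$ and $\Psi_1=\cdots=\Psi_m=:\Psi$. Hence $[d+d^*]=\bigl(\sum_{j=1}^m\mathrm{ind}(\Xi,x_j)\bigr)[x]=\Psi\bigl(\sum_{j=1}^m\mathrm{ind}(\Xi,x_j)\bigr)$.

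Finally I would evaluate the index. Let $\mathrm{ind}\colon K_0(M)\to K_0(\mathrm{pt})\cong\mathbb{Z}$ be the map induced by $M\to\mathrm{pt}$, which on the class of an elliptic operator is its Fredholm index. Since $\mathrm{pt}\xrightarrow{x_j}M\to\mathrm{pt}$ is the identity, $\mathrm{ind}\circ\Psi=\mathrm{id}_\mathbb{Z}$, so $\Psi$ is injective and $\mathrm{ind}([x])=1$. On the other hand $\mathrm{ind}([d+d^*])$ is the Fredholm index of the Euler characteristic operator, which by Hodge theory equals $\chi(M)=\sum_k(-1)^k b_k(M)$. Applying $\mathrm{ind}$ to the last displayed identity gives $\chi(M)=\sum_{j=1}^m\mathrm{ind}(\Xi,x_j)$, which is assertion (2); substituting this back, $[d+d^*]=\Psi(\chi(M))$, which is assertion (1) with the homomorphism $\Psi\colon\mathbb{Z}\to K_0(M)$, $n\mapsto n[x]$.

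I expect the only real obstacle to be the compatibility used in the second paragraph --- that Poincar\'e duality on $\mathbb{R}^n$ sends the Bott generator to a point class and is natural for open inclusions --- so that the three identifications in play ($K$-theory of $C_L^*$, of $RC_L^*$, and $K$-homology) can be threaded together consistently; once this bookkeeping is set up the remaining steps are formal, as Theorems~\ref{1.2} and \ref{1.3} have already absorbed the analytic content.
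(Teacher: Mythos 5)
Your proof is correct, but it takes a more explicit route than the paper's for part~(1). The paper's argument for part~(1) proceeds abstractly: it observes that $c_*\circ i_*\colon K_0(\bigcup_j N_\varepsilon(x_j))\cong\bigoplus_j\mathbb{Z}\to K_0(\mathrm{pt})\cong\mathbb{Z}$ is surjective, picks an $\alpha$ whose image generates $K_0(\mathrm{pt})$, asserts $\chi(M)\cdot i_*(\alpha)=[d+d^*]$, and then restricts $i_*$ to the subgroup $\langle\alpha\rangle\cong\mathbb{Z}$. That assertion is not actually a consequence of surjectivity alone --- it implicitly uses that the relevant classes all push forward to the same element of $K_0(M)$, which is precisely the point-class identification you make explicit. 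You instead run the full machinery of Theorems~\ref{1.2} and~\ref{1.3} with Lemma~\ref{BP_nonequivariant} to get $[d+d^*]=\sum_j\mathrm{ind}(\Xi,x_j)\,\Psi_j(1)$, identify each $\Psi_j(1)$ as a point class $[x_j]$ via the fact that Poincar\'e duality sends the Bott generator to the class of the origin, use connectedness of $M$ to equate all $[x_j]$, and only then apply $c_*$ to read off both assertions simultaneously. For part~(2) both proofs reduce to evaluating $c_*$ on the same sum, with $c_*([d+d^*])=\chi(M)$ via Hodge theory; you spell out that $c_*(\Psi_j(1))=1$ because $\mathrm{pt}\to M\to\mathrm{pt}$ is the identity, which the paper leaves implicit. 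In short, you trade the paper's brevity for a self-contained argument that closes the gap in its derivation of the identity $\chi(M)\cdot i_*(\alpha)=[d+d^*]$; the price is that you must explicitly assume (or reduce to) connectedness of $M$, which you correctly flag.
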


    We should note that part one of Theorem \ref{nonequivariant} is analogous to the main result in \cite{rj}, while part two of the theorem is the classical Poincar\'e-Hopf formula.

    \begin{proof}
    
        Consider the maps
        $$ \bigcup_{j=1}^{m} N_{\varepsilon}(x_{j}) \xrightarrow{i} M \xrightarrow{c} \mathrm{pt}, $$
        where $i$ denotes the inclusion of $ N_{\varepsilon}(x_{j}) $ into $M$, and $c$ represents the map collapsing $M$ to a single point.

        These maps induce the following K-homology maps:
        $$ K_{0}(\bigcup_{j=1}^{m} N_{\varepsilon}(x_{j})) \xrightarrow{i_{*}} K_{0}(M) \xrightarrow{c_{*}} K_{0}(\mathrm{pt}). $$

        Note that the map $ c_{*} $ corresponds to the index map, so we have $ c_{*}([d+d^{*}]) = \mathrm{ind}(d+d^*) = \chi(M) $. By Theorem \ref{1.2}, $ [d+d^{*}] \in K_{0}(M) $ lies in the image of $i_*$.

        Since $ c_{*} \circ i_{*} $ defines a surjection from $ K_{0}(\bigcup_{j=1}^{m} N_{\varepsilon}(x_{j})) \cong \bigoplus_{j=1}^{m} \mathbb{Z} $ to $ K_{0}(\mathrm{pt}) \cong \mathbb{Z} $, there exists an element $ \alpha \in K_{0}(\bigcup_{j=1}^{m} N_{\varepsilon}(x_{j})) $ such that $ c_{*} \circ i_{*} (\alpha) $ generates $ K_{0}(\mathrm{pt}) $, and $ \chi(M) \cdot i_{*} (\alpha) = [d+d^*] $.

        Consider the subgroup $ \langle \alpha \rangle $ of $ K_{0}(\bigcup_{j=1}^{m} N_{\varepsilon}(x_{j})) $ generated by $ \alpha $. As $ K_{0}(\bigcup_{j=1}^{m} N_{\varepsilon}(x_{j})) $ is torsion-free, we have $ \langle \alpha \rangle \cong \mathbb{Z} $, and under the map $ i_{*} : \langle \alpha \rangle \cong \mathbb{Z} \rightarrow K_{0}(M) $, we find that $ \chi(M) $ maps to $ [d+d^*] $.

        This establishes part one of Theorem \ref{nonequivariant}.

        To prove part two of Theorem \ref{nonequivariant}, observe that 
        $$ \chi(M) = c_{*} \left( \sum_{j=1}^{m} \mathrm{ind} (\Xi,x_{j}) \cdot i_{*} \circ r_{*} \circ \varphi_{j} (1) \right) \in \mathbb{Z}. $$

        Thus, the proof is complete.
    \end{proof}

    \subsection{equivariant case}

    In this section, we discuss our main theorem, which follows directly from the proofs of Theorems \ref{1.2} and \ref{1.3}. Our main result, Theorem \ref{main}, is a corollary of these two theorems and relies on Bott periodicity.

    First, we state the equivariant Bott periodicity theorem:

    \begin{lemma}[Equivariant Bott periodicity]
        \label{BP_equivariant}
        There exists an isomorphism
        $$ \beta_{j} : K_{0}(C^{*}(\Gamma_{j})) \cong K_{\Gamma_{j}}^0(\mathrm{pt}) \xrightarrow[\text{Bott periodicity}]{\cong} K_{\Gamma_{j}}^0(\mathbb{R}^n), $$
        which maps the class of the trivial representation $I_{j}$ of $\Gamma_{j}$ in $K_{0}(C^{*}(\Gamma_{j}))$ to the class of the Bott operator $\hat{c}(\Xi)$ in $K_{\Gamma_{j}}^0(\mathbb{R}^n)$.
    \end{lemma}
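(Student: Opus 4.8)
The plan is to identify every group occurring in the statement with the representation ring $R(\Gamma_j)$, and then to recognise the image of the trivial representation under the resulting composite as the Bott class.

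First I would record the two elementary identifications. Since the $\Gamma$-action on $M$ is proper, the stabiliser $\Gamma_j$ is finite, so $C^{*}(\Gamma_j)=\mathbb{C}[\Gamma_j]$ is finite dimensional; the Peter--Weyl splitting $\mathbb{C}[\Gamma_j]\cong\bigoplus_{\pi}\mathrm{End}(V_\pi)$ identifies $K_0(C^{*}(\Gamma_j))$ canonically with $R(\Gamma_j)$, and under it $[I_j]$ becomes the class of the one-dimensional trivial module. Likewise a $\Gamma_j$-equivariant vector bundle over a point is just a finite-dimensional representation of $\Gamma_j$, so $K_{\Gamma_j}^{0}(\mathrm{pt})\cong R(\Gamma_j)$, again sending $[I_j]$ to the trivial module. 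This gives the first isomorphism $K_0(C^{*}(\Gamma_j))\cong K_{\Gamma_j}^{0}(\mathrm{pt})$ and shows that it preserves the distinguished classes.

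Next I would apply equivariant Bott periodicity to $\mathbb{R}^{n}=T_{x_j}M$, equipped with the orthogonal $\Gamma_j$-action coming from the isometric action on $M$; this is the standard fact recalled from \cite{ph}. It is cleanest to use the Clifford-module description of $K_{\Gamma_j}^{0}(\mathbb{R}^{n})$: the bundle $\Lambda^{\bullet}_{\mathbb{C}}(T^{*}\mathbb{R}^{n})$ is a genuine $\Gamma_j$-equivariant module over $\mathrm{Cl}_{\mathbb{C}}(T\mathbb{R}^{n}\oplus T^{*}\mathbb{R}^{n})$ (no orientation or spin hypothesis on $\mathbb{R}^{n}$ is required, which is exactly why one works with the Euler operator), and $d+d^{*}$ is the associated Dirac-type operator. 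Equivariant Bott periodicity then says that $K_{\Gamma_j}^{0}(\mathbb{R}^{n})$ is a free $R(\Gamma_j)$-module of rank one on which the Thom map $R(\Gamma_j)=K_{\Gamma_j}^{0}(\mathrm{pt})\to K_{\Gamma_j}^{0}(\mathbb{R}^{n})$, $[\pi]\mapsto[\pi]\cdot[C]$, is an isomorphism sending $1=[I_j]$ to the class $[C]$ of the radial Bott operator. If one prefers a self-contained argument, this isomorphism can be proved either by the Kasparov Dirac--dual-Dirac method for the finite group $\Gamma_j$, or by induction on $n$ using the multiplicativity $\Lambda^{\bullet}(V\oplus W)^{*}\cong\Lambda^{\bullet}V^{*}\,\widehat{\otimes}\,\Lambda^{\bullet}W^{*}$ of the Euler operator under direct sums, together with the computation that the kernel of $d+d^{*}+\lambda\, C$ on $\mathbb{R}^{n}$ is the one-dimensional $\Gamma_j$-invariant Gaussian $0$-form. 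This Bott periodicity input is the one genuinely nonformal step, since it must hold for an arbitrary real orthogonal representation of $\Gamma_j$.

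Finally I would set $\beta_j$ to be the composite of these two isomorphisms, so that $\beta_j([I_j])=[C]$ by construction. By Lemma \ref{H}, $[\hat{c}(\Xi)]=\mathrm{ind}(\Xi,x_j)\cdot[C]$ in $K_{\Gamma_j}^{0}(\mathbb{R}^{n})$, and since $\mathrm{ind}(\Xi,x_j)\in\{\pm1\}$ is a unit this means $\beta_j$ identifies $[I_j]$ with the Bott class, which is represented (up to the sign $\mathrm{ind}(\Xi,x_j)$) by $\hat{c}(\Xi)$, exactly as asserted. Moreover this sign is precisely the one that cancels the factor $\mathrm{ind}(\Xi,x_j)$ produced by $\mathrm{PD}_j$ in Theorem \ref{1.3}: with $\varphi_j:=\mathrm{PD}_j\circ\beta_j$ one obtains $\varphi_j([I_j])=\mathrm{ind}(\Xi,x_j)\cdot[d+d^{*}]|_{N_{\varepsilon}(x_j)}$, and then $\sum_{j}\mathrm{ind}(\Xi,x_j)\cdot i_{*}\circ r_{*}\circ\varphi_j([I_j])=i_{*}\circ r_{*}\big(\bigoplus_{j}[d+d^{*}]|_{N_{\varepsilon}(x_j)}\big)=[d+d^{*}]$, which is Theorem \ref{main}.
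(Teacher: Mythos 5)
The paper's own proof of this lemma is a single sentence citing \cite{ph}; your proposal supplies a complete self-contained argument, which is correct and goes well beyond what the authors wrote, though the substantive input is the same. You make the two elementary identifications $K_0(C^*(\Gamma_j)) \cong R(\Gamma_j) \cong K^0_{\Gamma_j}(\mathrm{pt})$ (using finiteness of the stabiliser $\Gamma_j$, which you correctly extract from properness of the action), and then invoke the equivariant Thom/Bott isomorphism for the orthogonal $\Gamma_j$-representation $T_{x_j}M \cong \mathbb{R}^n$, working with the de Rham Clifford module $\Lambda^\bullet_{\mathbb{C}}(T^*\mathbb{R}^n)$ precisely so that no orientation or spin hypothesis is needed; your two suggested proofs of that nonformal step (Kasparov Dirac--dual-Dirac, or induction on $n$ via multiplicativity of the Euler complex plus the Gaussian kernel computation) are both standard and valid. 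One place where you are in fact \emph{more} precise than the lemma's own wording: you compute $\beta_j([I_j]) = [C]$, the class of the Bott operator $C$ with all positive signs as defined in Section 3.2, not $[\hat{c}(\Xi)]$. By Lemma \ref{H} these classes differ by the unit $\mathrm{ind}(\Xi,x_j)$, so the lemma's phrase ``the Bott operator $\hat{c}(\Xi)$'' conflates the two, and your reading $\beta_j([I_j])=[C]$ is the one actually consumed in the proof of Theorem \ref{main} (you verify this: then $\varphi_j([I_j]) = \mathrm{ind}(\Xi,x_j)\cdot[d+d^*]|_{N_\varepsilon(x_j)}$, and the outer sign in the sum squares it away). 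So your proof is correct and you have also, in effect, caught a notational slip in the statement; the only cosmetic issue is that saying this is ``exactly as asserted'' slightly undersells the point that $[C]$ and $[\hat{c}(\Xi)]$ genuinely differ by the sign when $r$ is odd.
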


    The proof of Lemma \ref{BP_equivariant}, which establishes the Bott periodicity of $\mathbb{R}^n$ (with even dimension) at the level of equivariant K-theory, can be found in \cite{ph}.

    \begin{proof}[Proof of Theorem \ref{main}]
        
        To prove Theorem \ref{main}, we first need to construct the map $ \varphi_{j} $.

        By Theorem \ref{1.3} and Bott periodicity, we define 
        $ \varphi_{j} = \mathrm{PD}_{j} \circ \beta_{j} $.
        $$ 
        \begin{tikzcd}
                                                                                                                                                     & K^{0}_{\Gamma_{j}} ( \mathbb{R}^{n}) \arrow[rd, "\text{Poincar\'e duality}"] &                                                 \\
K_{0}(C^*(\Gamma_{j}))          \cong          K^{0}_{\Gamma_{j}} ( \mathrm{pt} )  \arrow[ru, "\text{Bott periodicity}"] \arrow[rr, "\varphi_{j}", dashed] &                                                                              & K_{0} ( RC_{L}^{*}(\mathbb{R}^n)^{\Gamma_{j}} )
\end{tikzcd} .
$$

     Under the isomorphism, we have 
     $$ \varphi_{j} ( [I_{j}] ) = \mathrm{ind} ( \Xi , x_{j} ) [ d+d^* ]|_{N_{\varepsilon}(x_{j})}. $$

    Next, consider the induction:
    $$ 
    \bigoplus_{j=1}^{m} K_{0}(RC_{L}^*( N_{\varepsilon}(x_{j}) )^{\Gamma_{j}})
    \xrightarrow[r_{*}]{\cong}
    K_{0}\left( RC_{L}^{*} (\bigcup_{j \in \mathbb{N}} N_{\varepsilon}(x_{j})) ^{\Gamma} \right) .
    $$
    The detailed properties of induction can be found in Section~6.5 of \cite{yu}.

    Finally, Theorem \ref{1.2} induces the map

    $$ 
    K_{0}\left( RC_{L}^{*} (\bigcup_{j \in \mathbb{N}} N_{\varepsilon}(x_{j})) ^{\Gamma} \right)
    \xrightarrow[]{i_{*}}
    K_{0}^{\Gamma} (M),
    $$
    and we have
    $$[d+d^*]=\sum_{j=1}^{m} \mathrm{ind} (\Xi,x_{j})\cdot i_{*} \circ r_{*} \circ \varphi_{j}  ([I_{j}]).$$

    This completes the proof of Theorem \ref{main}.
    
    \end{proof}

	\bibliography{main}{}
\bibliographystyle{amsplain}

\end{document}